\begin{document}
\title{{\bf Meromorphic function fields closed by 
partial derivatives}}
\author{Yukitaka Abe}
\date{ }
\maketitle

\noindent
{\bf Abstract}\\
We characterize meromorphic function fields closed by
partial derivatives in $n$ variables. 
\footnote{
{\bf Mathematics Subject Classification (2010):}
32A20 (primary), 32W50 (secondary)}\ 
\footnote{
{\bf keywords:} Algebraic addition theorem, Degenerate
abelian functions, Briot-Bouquet type partial 
differential equations}\ 

\newtheorem{definition}{Definition}

\newtheorem{lemma}{Lemma}

\newtheorem{theorem}{Theorem}

\newtheorem{proposition}{Proposition}

\newtheorem{corollary}{Corollary}

\newtheorem{example}{Example}

\newtheorem{remark}{Remark}

\section{Introduction}
Weierstrass' theorem says that a meromorphic function $f$
on ${\mathbb C}$ admits an algebraic addition theorem if
and only if it is an elliptic function or a degenerate
elliptic function. Such a function $f$ has another
characterization that it is a solution of a Briot-Bouquet
differential equation. If we define $K := {\mathbb C}(f,f')$
by a solution $f$ of a Briot-Bouquet differential equation,
then $K$ is closed by derivative.
\par
Let ${\mathfrak M}({\mathbb C})$ be the field of meromorphic
functions on ${\mathbb C}$, and let $K$ be a subfield of
${\mathfrak M}({\mathbb C})$ which is finitely generated
over ${\mathbb C}$ and has the transcendence degree
${\rm Trans}_{{\mathbb C}}K = 1$. We characterize such $K$
which is closed by derivative in Theorem 1.
\par
Our main result is its generalization to the multidimensional
case. In Theorem 4 we give a characterization of subfields
of ${\mathfrak M}({\mathbb C}^n)$ which are closed by partial
derivatives, where ${\mathfrak M}({\mathbb C}^n)$ is the
field of meromorphic functions on ${\mathbb C}^n$.\par
We use the properties of isogenies to prove the main theorem.
They are collected in Section 4.
We recently showed in \cite{abe3} that any quasi-abelian variety is isogenous to
a product of geometrically simple quasi-abelian subvarieties.
In Section 4 we also prove
that this decomposition of a quasi-abelian variety is unique up to isogeny if it is 
of kind 0.

\section{One dimensional case}
The following differential equation is called a Briot-Bouquet
differential equation;
\begin{equation}
P(f',f) = 0,
\end{equation}
where $P$ is an irreducible polynomial. Every meromorphic
solution $f$ on ${\mathbb C}$ of the above equation is an elliptic
function or a degenerate elliptic function. This result was
first published by Briot and Bouquet \cite{briot-bouquet}.
Conversely, it is well known that an elliptic function or a
degenerate elliptic function satisfies (2.1) for some $P$.
Later, higher order Briot-Bouquet differential equations were
studied (for example \cite{picard}, \cite{hille} and \cite{eremenko}). 
Eremenko, Liao and Ng \cite{eremenko-liao-ng} finally
completed the study for any
order Briot-Bouquet
differential equations. We refer to \cite{eremenko-liao-ng}
for these equations.\par
Another important property of elliptic functions or degenerate
elliptic functions is Weierstrass' theorem. 
We say that $f \in {\mathfrak M}({\mathbb C})$ admits an
algebraic addition theorem if there exists an irreducible
polynomial $P$ such that
\begin{equation}
P(f(\zeta + \eta ), f(\zeta ), f(\eta )) = 0
\end{equation}
for all $\zeta ,\eta \in {\mathbb C}$. It is abbreviated
to ${\rm (AAT^*)}$. Combining the Briot-Bouquet theorem with
Weierstrass' theorem, we obtain the following statement:\\
The
following three conditions are equivalent for a meromorphic
function $f$ on ${\mathbb C}$;\\
(i) $f$ is a solution of some Briot-Bouquet differential
equation,\\
(ii) $f$ admits an algebraic addition theorem,\\
(iii) $f$ is an elliptic function or a degenerate elliptic
function.\par
We reconsider this property for subfields of
${\mathfrak M}({\mathbb C})$.
For a subfield $K$ of ${\mathfrak M}({\mathbb C})$ we
consider the following condition (T) concerning the
transcendence degree.\par
\medskip 
\noindent
(T) $K$ is finitely generated over ${\mathbb C}$ and
${\rm Trans}_{{\mathbb C}}K = 1$.\par
\medskip 
\noindent
If $K$ satisfies the condition (T), then we may write
$K = {\mathbb C}(f_0,f_1)$ by some functions $f_0,f_1 \in K$.

\begin{definition}
Let $K = {\mathbb C}(f_0,f_1)$ be a subfield of ${\mathfrak M}({\mathbb C})$
satisfying the condition {\rm (T)}. We say that $K$ admits an
algebraic addition theorem (it is abbreviated to {\rm (AAT)}) if
for any $j=0,1$ there exists a rational function $R_j$ such
that
\begin{equation}
f_j(\zeta + \eta ) = R_j(f_0(\zeta ), f_1(\zeta ),f_0(\eta ),
f_1(\eta ))
\end{equation}
for all $\zeta ,\eta \in {\mathbb C}$.
\end{definition}

It is easily checked by an elementary algebraic argument that
the above definition does not depend on the choice of
generators $f_0, f_1$ of $K$. 

\begin{lemma}
Let $K$ be a subfield of ${\mathfrak M}({\mathbb C})$ satisfying
the condition {\rm (T)}. If $K$ admits {\rm (AAT)}, then any
$f \in K$ admits ${\rm (AAT^*)}$.\par
Conversely,
if a non-constant $f \in K$ admits ${\rm (AAT^*)}$,
then there exists an algebraic extension $\widetilde{K}$ of
$K$ which admits {\rm (AAT)}.
\end{lemma}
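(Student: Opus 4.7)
The plan is to treat the two implications separately: the forward direction by an elementary elimination, the converse via the classical Weierstrass theorem combined with a covering argument on the algebraic curve associated to $\widetilde{K}$.

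For the forward direction, fix $f \in K$ and write $f = R(f_0, f_1)$ with $R$ rational. Substituting the two addition formulas of Definition~1 gives
\[
f(\zeta + \eta) = R\bigl(R_0(f_0(\zeta), f_1(\zeta), f_0(\eta), f_1(\eta)),\, R_1(f_0(\zeta), f_1(\zeta), f_0(\eta), f_1(\eta))\bigr),
\]
so $f(\zeta+\eta)$ is a rational function of $f_0(\zeta), f_1(\zeta), f_0(\eta), f_1(\eta)$. If $f$ is constant the claim is trivial; otherwise condition (T) implies $f_0$ and $f_1$ are algebraic over $\mathbb{C}(f)$, so there exist polynomials $Q_j$ with $Q_j(f_j, f) \equiv 0$. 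Taking successive resultants of these $Q_j$'s, evaluated at $\zeta$ and at $\eta$, against the displayed expression eliminates the four quantities $f_0(\zeta), f_1(\zeta), f_0(\eta), f_1(\eta)$ and leaves a polynomial relation in $f(\zeta+\eta), f(\zeta), f(\eta)$ alone; any irreducible factor of this relation yields the desired $P$.

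For the converse, assume a non-constant $f \in K$ admits ${\rm (AAT^*)}$. By the Weierstrass theorem recalled in Section~2, $f$ is either elliptic for some lattice $\Lambda$, rational in $e^{\lambda z}$ for some $\lambda \in \mathbb{C}^*$, or rational in $z$. Introduce the standard ambient field accordingly: $L = \mathbb{C}(\wp_\Lambda, \wp'_\Lambda)$, $L = \mathbb{C}(e^{\lambda z})$, or $L = \mathbb{C}(z)$, respectively. In every case $f \in L$, $L$ satisfies (T), and $L$ admits (AAT) via the classical addition formula for $\wp$, the multiplicativity of $e^{\lambda z}$, or the additivity of $z$. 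Set $\widetilde{K} := K \cdot L \subset \mathfrak{M}(\mathbb{C})$; since $K$ and $L$ are both algebraic over $\mathbb{C}(f)$, the compositum $\widetilde{K}$ is algebraic over $K$ and still satisfies (T).

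The remaining and most delicate step is to verify that $\widetilde{K}$ itself (not merely its subfield $L$) admits (AAT). View $\widetilde{K}$ as the function field of an algebraic curve $\widetilde{C}$ equipped with a finite morphism $\widetilde{C} \to C_L$. The inclusion $\widetilde{K} \subset \mathfrak{M}(\mathbb{C})$ supplies a non-constant holomorphic map $\mathbb{C} \to \widetilde{C}$, which through the universal cover forces $\widetilde{C}$ to have genus at most $1$; combined with the type of $C_L$, this forces $\widetilde{C}$ to be an elliptic curve $\mathbb{C}/\Lambda'$ with $\Lambda' \subset \Lambda$ of finite index in the elliptic case, and analogously in the two degenerate cases. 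Hence $\widetilde{K}$ is itself a standard addition-formula field---$\mathbb{C}(\wp_{\Lambda'}, \wp'_{\Lambda'})$, $\mathbb{C}(e^{\mu z})$, or $\mathbb{C}(z)$---which admits (AAT). The main obstacle is precisely this last identification: reconciling the algebraic extension $L \subset \widetilde{K}$ with the analytic constraint $\widetilde{K} \subset \mathfrak{M}(\mathbb{C})$ requires the Liouville-type restriction on the genus of $\widetilde{C}$ together with the classification of non-constant morphisms between elliptic curves as isogenies composed with translations.
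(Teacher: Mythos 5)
Your forward direction is essentially the paper's argument: both reduce to the fact that $f(\zeta+\eta)$ lies in a field algebraic over ${\mathbb C}(f(\zeta),f(\eta))$ and eliminate the auxiliary quantities; the paper pivots the elimination through $f_1$ via the relations (2.4)--(2.7), while you substitute into $f=R(f_0,f_1)$ and take resultants against the minimal polynomials of $f_0,f_1$ over ${\mathbb C}(f)$ --- the same idea in a slightly different order. Your converse, however, is a genuinely different route. The paper first proves (by another elimination) that \emph{every} non-constant $g\in K$ admits ${\rm (AAT^*)}$, applies the Briot--Bouquet/Weierstrass classification to each such $g$, and uses algebraic dependence within $K$ to rule out mixed types, concluding that $K$ itself already sits inside one of ${\mathbb C}(\zeta)$, ${\mathbb C}(e^{\alpha\zeta})$ or an elliptic function field, which is then taken as $\widetilde{K}$. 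You instead classify only the single function $f$, form the compositum $\widetilde{K}=K\cdot L$ (correctly noting it is algebraic over $K$ since both factors are algebraic over ${\mathbb C}(f)$), and identify $\widetilde{K}$ geometrically: the curve $\widetilde{C}$ receives a non-constant holomorphic map from ${\mathbb C}$, forcing genus $\leqq 1$, and the finite morphism $\widetilde{C}\to C_L$ compatible with the standard uniformization of $L$ pins $\widetilde{K}$ down as a standard addition-formula field. Your approach buys a cleaner and more conceptual identification of $\widetilde{K}$ and avoids re-running the elimination for every $g\in K$, at the cost of importing uniformization/Picard-type input and the classification of maps between elliptic curves; the paper's approach stays within elementary field theory plus repeated use of the one-variable classification, and has the mild advantage of showing directly that $K$ itself embeds in a single standard field. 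Both arguments are correct at the level of detail given.
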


\begin{proof}
Let $K = {\mathbb C}(f_0, f_1)$. We assume that $K$ admits
{\rm (AAT)}. We first consider the case that both $f_0$ and
$f_1$ are non-constant. Take any $f \in K$ which is a non-constant
function. Since $f$ and $f_1$ are algebraically dependent, there
exists an irreducible polynomial $P$ such that
\begin{equation}
P(f(\zeta), f_1(\zeta)) = 0.
\end{equation}
Then we have
\begin{equation}
P(f(\zeta + \eta), f_1(\zeta + \eta)) = 0
\end{equation}
for all $\zeta , \eta \in {\mathbb C}$. We also have an irreducible
polynomial $P_0$ such that
\begin{equation}
P_0(f_0(\zeta), f_1(\zeta)) = 0.
\end{equation}
By the assumption we can take a rational function $R$ such that
\begin{equation}
f_1(\zeta + \eta) = R(f_0(\zeta), f_1(\zeta), f_0(\eta), f_1(\eta)).
\end{equation}
Eliminating $f_1(\zeta + \eta), f_1(\zeta)$ and $f_1(\eta)$ by
(2.4), (2.5), (2.6) and (2.7), we obtain an algebraic relation
$$
Q(f(\zeta + \eta), f(\zeta), f(\eta)) = 0,$$
where $Q$ is an irreducible polynomial.\par
When $f_0$ or $f_1$ is constant, we obtain the conclusion by the
same argument.\par
Next we assume that a non-constant $f \in K$ admits 
${\rm (AAT^*)}$. Let $g \in K$ be another non-constant function.
Then there exists an irreducible polynomial $\widetilde{P}$ such that
\begin{equation}
\widetilde{P}(f(\zeta), g(\zeta)) = 0.
\end{equation}
Hence we have
\begin{equation}
\widetilde{P}(f(\zeta + \eta), g(\zeta + \eta)) = 0
\end{equation}
for all $\zeta , \eta \in {\mathbb C}$. Since $f$ admits ${\rm (AAT^*)}$,
there exists an irreducible polynomial $\widetilde{Q}$ such that
\begin{equation}
\widetilde{Q}(f(\zeta + \eta), f(\zeta), f(\eta)) = 0
\end{equation}
for all $\zeta , \eta \in {\mathbb C}$. Eliminating $f(\zeta + \eta)$ by
(2.9) and (2.10), we obtain
$$\widetilde{Q}_0(g(\zeta + \eta), f(\zeta), f(\eta)) = 0,$$
where $\widetilde{Q}_0$ is an irreducible polynomial. Using (2.8), we
finally obtain
$$S(g(\zeta + \eta ), g(\zeta), g(\eta)) = 0$$
for some irreducible polynomial $S$. Then $g$ also admits
${\rm (AAT^*)}$.\par
We set $K_0 := {\mathbb C}(f)$. Then $K_0 \subset K$ and
${\rm Trans}_{{\mathbb C}}K_0 = 1$. By the Briot-Bouquet theorem, $K_0$ is
contained in ${\mathbb C}(\zeta)$, ${\mathbb C}(e^{\alpha \zeta})$ with
$\alpha \in {\mathbb C}^{*}$ or an elliptic function field $E$.
Assume that $K_0 \subset {\mathbb C}(\zeta)$. If there exists $g \in K$
with $g \notin {\mathbb C}(\zeta)$, then $g$ belongs to 
${\mathbb C}(e^{\alpha \zeta})$ or $E$ by the Briot-Bouquet theorem.
Then $f$ and $g$ are algebraically independent. This contradicts our
assumption. Therefore we have $K \subset {\mathbb C}(\zeta)$.
By the same argument for other cases, we conclude that $K$ is
a subfield of one of these function fields. Thus $K$ has the algebraic
extension $\widetilde{K}$ which admits {\rm (AAT)}.
\end{proof}

\begin{definition}
Let $K$ be a subfield of ${\mathfrak M}({\mathbb C})$. $K$ is
closed by derivative if $f' \in K$ for any $f \in K$.
\end{definition}

Suppose that $f \in {\mathfrak M}({\mathbb C})$ is a solution
of a Briot-Bouquet differential equation (2.1). If we set
$K := {\mathbb C}(f,f')$, then $K$ satisfies the condition (T)
and is closed by derivative.
\par

We obtain the following theorem. We will give its proof
after stating some results in the case of $n$ variables.

\begin{theorem}
Let $K$ be a subfield of ${\mathfrak M}({\mathbb C})$ satisfying
the condition {\rm (T)}. Then the following conditions are
equivalent.\\
(1) There exists a non-constant $f \in K$ with $f' \in K$.\\
(2) $K$ is closed by derivative.\\
(3) There exists a ${\mathbb C}$-linear isomorphism
$\Phi : {\mathbb C} \longrightarrow {\mathbb C}$ such that
$\Phi ^{*}K := \{ f \circ \Phi ; f \in K \}$ is ${\mathbb C}(\zeta )$,
${\mathbb C}(e^{\zeta })$ or a subfield of an elliptic function field
which is closed by derivative.
\end{theorem}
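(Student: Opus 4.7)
The plan is to establish the cycle $(1) \Rightarrow (3) \Rightarrow (2) \Rightarrow (1)$. The implication $(2) \Rightarrow (1)$ is immediate, since condition (T) forces $K \neq \mathbb{C}$, so a non-constant element of $K$ exists. For $(3) \Rightarrow (2)$, any $\mathbb{C}$-linear isomorphism $\Phi(\zeta) = a\zeta + b$ with $a \neq 0$ satisfies $(f \circ \Phi)' = a \, (f' \circ \Phi)$, so pullback along $\Phi$ intertwines the derivation up to a non-zero scalar; closure of $\Phi^{*}K$ by derivative is therefore equivalent to closure of $K$. The three listed target fields are closed by derivative by inspection: $\mathbb{C}(\zeta)$ trivially, $\mathbb{C}(e^{\zeta})$ because $(e^{\zeta})' = e^{\zeta}$ combined with the chain rule, and the elliptic subfields by the hypothesis built into (3).

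The core of the argument is $(1) \Rightarrow (3)$. Given a non-constant $f \in K$ with $f' \in K$, condition (T) forces $f$ and $f'$ to be algebraically dependent over $\mathbb{C}$, so they satisfy an irreducible polynomial relation $P(f', f) = 0$, that is, a Briot-Bouquet equation (2.1). The Briot-Bouquet theorem then guarantees that $f$ is an elliptic or degenerate elliptic function, and in particular $f$ admits $\mathrm{(AAT^{*})}$. Running the argument carried out in the second half of the proof of Lemma 1 with this $f$, one concludes that $K$ itself is already contained in one of $\mathbb{C}(\zeta)$, $\mathbb{C}(e^{\alpha \zeta})$ for some $\alpha \in \mathbb{C}^{*}$, or an elliptic function field $E$.

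It remains to identify $K$ inside each ambient field. In the elliptic case no further work is needed: $\Phi = \mathrm{id}$ exhibits $K$ as a derivative-closed subfield of $E$. In the rational case, L\"uroth's theorem yields $K = \mathbb{C}(g)$ for some $g \in \mathbb{C}(\zeta)$, and closure under derivation reads $g' = R(g)$ for some rational $R$. If $g$ had degree $\geq 2$, Riemann-Hurwitz would supply a finite critical point $\zeta_{0}$ at which $g - g(\zeta_{0})$ vanishes to order $k \geq 2$; writing $R$ locally as $R(y) = (y - g(\zeta_{0}))^{m} T(y)$ with $T(g(\zeta_{0})) \neq 0$ and comparing the vanishing order $k-1$ of $g'$ at $\zeta_{0}$ with the vanishing order $mk$ of $R(g)$ forces $mk = k-1$, which is impossible. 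Hence $g$ is M\"obius, $K = \mathbb{C}(\zeta)$, and $\Phi = \mathrm{id}$ works. In the exponential case, set $w = e^{\alpha \zeta}$ so that $d/d\zeta$ corresponds to $\alpha w \, d/dw$; L\"uroth gives $K = \mathbb{C}(R(w))$ and the constraint becomes $wR'(w) = S(R(w))$. The same local comparison rules out any critical point of $R$ at a finite non-zero $w_{0}$, so all ramification of $R : \mathbb{P}^{1} \to \mathbb{P}^{1}$ is concentrated at $0$ and $\infty$; Riemann-Hurwitz then pins $R$ down to $c w^{n}$ up to a M\"obius transformation of the target, so $K = \mathbb{C}(w^{n}) = \mathbb{C}(e^{n \alpha \zeta})$, and $\Phi(\zeta) := \zeta/(n\alpha)$ yields $\Phi^{*}K = \mathbb{C}(e^{\zeta})$.

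The main obstacle I anticipate is precisely this rigidity step in the last two cases: a generic L\"uroth subfield of $\mathbb{C}(\zeta)$ or of $\mathbb{C}(e^{\alpha \zeta})$ fails to be closed under differentiation, and excluding such subfields rests on the local vanishing-order comparison together with Riemann-Hurwitz. Once this rigidity is in place, the three Briot-Bouquet cases produced by Lemma 1 correspond exactly to the three options listed in (3), completing the cycle.
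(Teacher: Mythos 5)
Your cycle $(1)\Rightarrow(3)\Rightarrow(2)\Rightarrow(1)$ is sound in outline, and the rigidity step --- showing that a derivative-closed subfield of $\mathbb{C}(\zeta)$ or of $\mathbb{C}(e^{\alpha\zeta})$ of transcendence degree $1$ must be the whole field up to rescaling --- is handled by a genuinely different method: L\"uroth plus a local vanishing-order comparison plus Riemann--Hurwitz. The paper instead differentiates the minimal polynomial of $z_i$ (resp.\ $e^{\zeta}$) over $K$ (Lemmas 4 and 5), and in the exponential case first pins down the period group via the isogeny machinery of Proposition 3; your route avoids that machinery entirely, and the paper's own proof of Theorem 1 also routes through the heavier Proposition 5 where you use the elimination argument from the second half of Lemma 1 (legitimate, since that is the paper's own argument). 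Two small points to tidy: your order comparison is stated only when the critical value is finite, so poles of order $\geq 2$ need the parallel computation or a preliminary M\"obius change of the target; and the conclusion of the exponential case should note $R(0)\neq R(\infty)$ before normalizing $R$ to $cw^{n}$.

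The genuine gap is that you never prove from hypothesis $(1)$ alone that $K$ is closed by derivative, yet you use this. In the elliptic case you write that ``no further work is needed'' and that $\Phi=\mathrm{id}$ exhibits $K$ as a derivative-closed subfield of $E$ --- but $(1)$ only gives one non-constant $f$ with $f'\in K$, and statement $(3)$ in the elliptic case explicitly requires $\Phi^{*}K$ to be closed by derivative; nothing in your argument supplies this. The same assumption is smuggled into the other two cases when you say ``closure under derivation reads $g'=R(g)$'' for a L\"uroth generator $g$ (there it happens to be recoverable, since $f=S(g)$ with $S$ non-constant gives $g'=f'/S'(g)\in K$, but you should say so). The fix is exactly the paper's Lemma 7 step: any $h\in K$ satisfies an irreducible relation $Q(h,f)=0$ because ${\rm Trans}_{\mathbb{C}}K=1$; differentiating gives $Q_{T}(h,f)\,h'+Q_{S}(h,f)\,f'=0$, hence $h'=-Q_{S}(h,f)\,f'/Q_{T}(h,f)\in\mathbb{C}(h,f,f')\subset K$. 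Inserting this one step (which in fact proves $(1)\Rightarrow(2)$ directly, as the paper does via Lemmas 6 and 7) closes the gap in all three cases.
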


Theorem 1 means that ${\mathbb C}(\zeta )$,
${\mathbb C}(e^{\zeta })$ or an elliptic function field is essentially maximal
in the subfields which are closed by derivative and satisfy the
condition {\rm (T)}. \par
We generalize Theorem 1 to the $n$-dimensional case (see Theorem 4). 
It is our
purpose in this paper.

\section{Algebraic addition theorem}
Weierstrass had frequently stated the following in his lectures
in Berlin (see \cite{painleve}):\\
Every system of $n$ (independent) functions in $n$ variables
which admits an addition theorem is an algebraic combination of
$n$ abelian (or degenerate abelian) functions with the same periods.
\par
However, he had never published his proof of the above statement.
We did not have clear concept of degenerate abelian functions
untill quite recently. The precise meaning of Weierstrass'
statement became clear in \cite{abe1} and \cite{abe2}. We also
obtained the explicit representation of degenerate abelian
functions in \cite{abe2}. We recall some results which are
needed in our arguments.\par
Let $K$ be a subfield of 
${\mathfrak M}({\mathbb C}^n)$. We consider
the following condition (T) as in the case $n=1$.\par
\medskip 
\noindent
(T) $K$ is finitely generated over ${\mathbb C}$ and
${\rm Trans}_{{\mathbb C}}K = n$.\par
\medskip
\noindent
If $K$ satisfies the condition (T), then we can take
$f_0, f_1, \dots , f_n \in K$ with $K = {\mathbb C}
(f_0, f_1, \dots , f_n)$.

\begin{definition}
Let $K = {\mathbb C}(f_0, f_1, \dots , f_n)$ be a subfield
of ${\mathfrak M}({\mathbb C}^n)$ satisfying the condition
{\rm (T)}. We say that $K$ admits an algebraic addition
theorem (it is also abbreviated to {\rm (AAT)}) if for any
$j = 0, 1, \dots , n$ there exists a rational function $R_j$
such that
\begin{equation}
f_j(z + w) = R_j(f_0(z), f_1(z), \dots , f_n(z), f_0(w),
f_1(w), \dots ,f_n(w))
\end{equation}
for all $z,w \in {\mathbb C}^n$.
\end{definition}

The above definition does not depend on the choice of
generators $f_0, f_1, \dots , f_n$ of $K$.\par
A toroidal group is a connected complex Lie group without
non-constant holomorphic function. It is well known that
toroidal groups are commutative. Then every toroidal group
is written as ${\mathbb C}^n/\Gamma $, where $\Gamma $ is a
discrete subgroup of ${\mathbb C}^n$ with ${\rm rank}\; \Gamma
= n+m$, $1 \leqq m \leqq n$.
We denote by ${\mathbb R}_{\Gamma }^{n+m}$ the real linear
subspace generated by $\Gamma $. The maximal complex linear
subspace contained in ${\mathbb R}_{\Gamma }^{n+m}$ has the
complex dimension $m$. It is written as ${\mathbb C}_{\Gamma }^m
= {\mathbb R}_{\Gamma }^{n+m} \cap \sqrt{-1}{\mathbb R}_{\Gamma }^{n+m}$.
When ${\rm rank}\; \Gamma = n+m$, a toroidal group
${\mathbb C}^n/\Gamma $ has the structure of principal
$({\mathbb C}^{*})^{n-m}$-bundle $\rho : {\mathbb C}^n/\Gamma 
\longrightarrow {\mathbb T}$ over an $m$-dimensional complex
torus ${\mathbb T}$. Replacing fibers $({\mathbb C}^{*})^{n-m}$
with $({\mathbb P}^1)^{n-m}$, we obtain the associated
$({\mathbb P}^1)^{n-m}$-bundle $\overline{\rho } : 
\overline{{\mathbb C}^n/\Gamma } \longrightarrow {\mathbb T}$.
We call $\overline{{\mathbb C}^n/\Gamma }$ the standard
compactification of a toroidal group ${\mathbb C}^n/\Gamma $.
\par
A toroidal group ${\mathbb C}^n/\Gamma $ is called a quasi-abelian
variety if there exists a hermitian form ${\mathcal H}$ on
${\mathbb C}^n$ such that
\par
\medskip
\noindent
(i) ${\mathcal H}$ is positive definite on ${\mathbb C}_{\Gamma }^m$,\\
(ii) the imaginary part ${\mathcal A} := {\rm Im}{\mathcal H}$ of ${\mathcal H}$ is
${\mathbb Z}$-valued on $\Gamma \times \Gamma $.\par
\medskip
\noindent
The above hermitian form ${\mathcal H}$ is said to be an ample
Riemann form for ${\mathbb C}^n/\Gamma $. We denote by
${\mathcal A}_{\Gamma }$ the restriction of ${\mathcal A}$ on
${\mathbb R}_{\Gamma }^{n+m} \times {\mathbb R}_{\Gamma }^{n+m}$.
If ${\mathcal H}$ is an ample Riemann form for a quasi-abelian
variety ${\mathbb C}^n/\Gamma $, then ${\rm rank}{\mathcal A}_
{\Gamma } = 2(m + k)$ with $0 \leqq 2k \leqq n-m$. 
In this case we
say that the ample Riemann form ${\mathcal H}$ is of kind $k$.
The kind of a quasi-abelian variety ${\mathbb C}^n/\Gamma $
is defined
by the smallest kind of ample Riemann forms for ${\mathbb C}^n/
\Gamma $ (\cite{abe-umeno}). A quasi-abelian variety 
${\mathbb C}^n/\Gamma $ with ${\rm rank}\; \Gamma = n+m$ is
of kind 0 if and only if it is a principal $({\mathbb C}^{*})^{n-m}$-bundle
$\rho : {\mathbb C}^n/\Gamma \longrightarrow {\mathbb A}$ over
an $m$-dimensional abelian variety ${\mathbb A}$. Its standard
compactification $\overline{{\mathbb C}^n/\Gamma }$ is the
associated $({\mathbb P}^1)^{n-m}$-bundle $\overline{\rho } :
\overline{{\mathbb C}^n/\Gamma } \longrightarrow {\mathbb A}$.\par
By the Remmert-Morimoto theorem any connected commutative complex
Lie group $X$ of dimension $n$ is represented as
$$ X = {\mathbb C}^n/\Gamma = {\mathbb C}^p \times
({\mathbb C}^{*})^q \times ({\mathbb C}^r / \Gamma _0),$$
where ${\mathbb C}^r/\Gamma _0$ is a toroidal group and
$p+q+r = n$. By the standard compactification
$\overline{{\mathbb C}^r/\Gamma _0}$ of ${\mathbb C}^r/\Gamma _0$
we obtain a compactification
$$\overline{X} = ({\mathbb P}^1)^{p} \times ({\mathbb P}^1)^{q}
\times \overline{{\mathbb C}^r/\Gamma _0}$$
of $X$. It is also called the standard compactification of
a connected commutative complex Lie group $X$.\par
Let $f \in {\mathfrak M}({\mathbb C}^n)$. We define the period
group $\Gamma _f$ of $f$ by
$$\Gamma _f := \{ \gamma \in {\mathbb C}^n;\, f(z + \gamma ) =
f(z)\quad \text{for all $z \in {\mathbb C}^n$}\}.$$

\begin{definition}
A meromorphic function $f$ on ${\mathbb C}^n$ is said to be
non-degenerate if its period group $\Gamma _f$ is discrete.
\end{definition}

For a subfield $K$ of ${\mathfrak M}({\mathbb C}^n)$ we denote by
$\Gamma _{K} := \bigcap _{f \in K}\Gamma _f$ the period group of $K$.
A subfield $K$ is said to be non-degenerate if it has a non-degenerate
meromorphic function $f$. Since $\Gamma _K \subset \Gamma _f$
for any $f \in K$, a subfield $K$ is non-degenerate if and only if 
$\Gamma _K$ is discrete.\par
Let $X = {\mathbb C}^n/\Gamma $ be a connected commutative complex
Lie group as above. We denote by ${\mathfrak M}(X)$ the field of
meromorphic functions on $X$. Let $\sigma : {\mathbb C}^n
\longrightarrow X$ be the canonical projection.
Then, for any subfield $K$ of ${\mathfrak M}({\mathbb C}^n)$ with
$\Gamma \subset \Gamma _K$ there exists a subfield $\kappa $ of
${\mathfrak M}(X)$ such that $K = \sigma ^{*}\kappa $.
Let ${\mathfrak M}(\overline{X})$ be the field of meromorphic
functions on the standard compactification $\overline{X}$ of $X$.
We denote by ${\mathfrak M}(\overline{X})|_X$ the restriction
of ${\mathfrak M}(\overline{X})$ onto $X$.

\begin{definition}
A subfield $K$ of ${\mathfrak M}({\mathbb C}^n)$ is said to be
a {\rm W}-type subfield if $K = \sigma ^{*}({\mathfrak M}(\overline{X})|_X)$,
where $X = {\mathbb C}^p \times ({\mathbb C}^{*})^q \times
{\mathcal Q}$ with an $r$-dimensional quasi-abelian variety
${\mathcal Q}$ of kind 0, $n = p + q + r$ and $\sigma : {\mathbb C}^n
\longrightarrow X$ is the projection.
\end{definition}

The following theorem is proved in \cite{abe1} and \cite{abe2}.

\begin{theorem}
Let $K$ be a non-degenerate subfield of ${\mathfrak M}({\mathbb C}^n)$
satisfying the condition {\rm (T)}. If $K$ admits {\rm (AAT)},
then there exists a ${\mathbb C}$-linear isomorphism
$\Phi : {\mathbb C}^n \longrightarrow {\mathbb C}^n$ such that
$\Phi ^{*}K$ is a subfield of a {\rm W}-type subfield.
\end{theorem}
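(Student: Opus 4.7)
The plan is to use (AAT) to transfer the additive structure of $\mathbb{C}^n$ onto an algebraic variety built from the generators of $K$, compare the resulting algebraic group with the quotient $\mathbb{C}^n/\Gamma_K$, and invoke Chevalley's decomposition together with Remmert--Morimoto to recognise the factors. Write $K = \mathbb{C}(f_0, f_1, \dots, f_n)$ and form the meromorphic map $F = (f_0 : \dots : f_n) : \mathbb{C}^n \dashrightarrow \mathbb{P}^{n+1}$. Because $\mathrm{Trans}_{\mathbb{C}}K = n$ the image closure $V$ is an irreducible projective variety of dimension $n$ with function field $\mathbb{C}(V) \cong K$, and the identities $f_j(z+w) = R_j(F(z),F(w))$ given by (AAT) are exactly the statement that $V$ carries a rational group law. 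By Weil's theorem on rational group laws one obtains an algebraic group $G$, birational to $V$, together with a meromorphic homomorphism $F : \mathbb{C}^n \to G$ whose kernel is $\Gamma_K$.

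Since $K$ is non-degenerate, $\Gamma_K$ is discrete, so $F$ descends to an injective meromorphic group homomorphism $\bar F : X := \mathbb{C}^n/\Gamma_K \to G$ with Zariski-dense image. I would next apply Chevalley's structure theorem: there is an exact sequence $1 \to L \to G \to A \to 1$ with $L$ a connected commutative linear algebraic group and $A$ an abelian variety. Commutative connected linear algebraic groups are of the form $\mathbb{C}^a \times (\mathbb{C}^{*})^b$, so $L$ must match the non-compact ``linear'' part of $X$.

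Now apply Remmert--Morimoto to write $X = \mathbb{C}^p \times (\mathbb{C}^{*})^q \times \mathcal{T}$ with $\mathcal{T} = \mathbb{C}^r/\Gamma_0$ a toroidal group, and align this decomposition with the Chevalley filtration of $G$ by pulling back the exact sequence via $\bar F$. This forces $(p,q) = (a,b)$ and exhibits $\mathcal{T} \to A$ as a meromorphic homomorphism with discrete kernel onto an abelian variety; equivalently, $\mathcal{T}$ is a principal $(\mathbb{C}^{*})^{n-m}$-bundle over the abelian variety $A$, which is precisely the characterisation of a quasi-abelian variety $\mathcal{Q}$ of kind $0$ recalled in \S 3. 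Choosing a $\mathbb{C}$-linear isomorphism $\Phi$ that straightens the coordinates so that the projection $\sigma : \mathbb{C}^n \to X$ matches the product decomposition, it remains only to check $\Phi^{*}K \subset \sigma^{*}(\mathfrak{M}(\overline{X})|_{X})$, which amounts to extending each $f_j$ (already rational on $G$) across the added $(\mathbb{P}^1)^{n-m}$-fibres of the standard compactification.

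The main obstacle is the final extension step: passing from ``$f_j$ is rational on some equivariant completion of $G$'' to ``$f_j$ extends rationally to the standard compactification $\overline{X}$''. This requires a careful analysis of the $(\mathbb{C}^{*})^{n-m}$-bundle structure on $\mathcal{Q}$ and the pole behaviour of degenerate abelian functions along the boundary divisor of $\overline{\mathcal{Q}}$, which is exactly the explicit-representation theory of degenerate abelian functions developed in \cite{abe1} and \cite{abe2}. The Weil group-chunk step and the compatibility between Chevalley's algebraic decomposition and the analytic Remmert--Morimoto decomposition are routine by comparison.
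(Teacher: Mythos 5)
The paper does not actually prove this statement (its Theorem 2): it is quoted from \cite{abe1} and \cite{abe2}. The closest internal analogue of a proof is the combination of Proposition 7 with the implication (2) $\Rightarrow$ (3) of Theorem 4, and that route is analytic and quite different from yours: one passes to $X=\mathbb{C}^n/\Gamma_K$, uses the Remmert--Morimoto decomposition and the condition (D) to reduce to complex lines in the fibres of $\rho_K$, extends each function over $(\mathbb{P}^1)^{p+q+r-s}$ fibrewise via the one--variable Weierstrass/Briot--Bouquet theory and a Levi-type continuation, and finally uses a transcendence-degree count to force $\overline{X}$ to be projective and the toroidal factor to be of kind $0$. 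Your proposal instead algebraizes the addition theorem (Weil's group-chunk theorem, then Chevalley). That is a legitimate classical strategy, but as written it has a genuine gap, and it is not where you locate it.

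The gap is the step you call routine: the alignment of the Chevalley filtration $1\to L\to G\to A\to 1$ with the Remmert--Morimoto decomposition of $X$. A connected commutative algebraic group over $\mathbb{C}$ is in general \emph{not} a product of $\mathbb{G}_a^{a}\times\mathbb{G}_m^{b}$ with a semi-abelian variety, because $\mathrm{Ext}^1(A,\mathbb{G}_a)\cong H^1(A,\mathcal{O}_A)\neq 0$: there are non-split vector extensions of abelian varieties. The universal vector extension $E^\natural$ of an elliptic curve $E$ is the standard example. Analytically $E^\natural\cong\mathbb{C}^2/\Lambda$ with $\Lambda$ the rank-two lattice of period pairs $(\omega_i,\eta_i)$, which by the Legendre relation spans $\mathbb{C}^2$ and is totally real, so Remmert--Morimoto gives $E^\natural\cong(\mathbb{C}^*)^2$ with $(p,q,r)=(0,2,0)$; but the Chevalley decomposition has $L=\mathbb{G}_a$ and $A=E$, so $(a,b)=(1,0)$ and your claim ``$(p,q)=(a,b)$'' fails. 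Worse, $\mathbb{C}(E^\natural)$ (generated by $\wp(z)$, $\wp'(z)$, $\zeta(z)-u$) is \emph{not} contained in $\mathbb{C}(e^{m_1},e^{m_2})$ for any linear coordinates: a containment would produce a dominant rational map $(\mathbb{P}^1)^2\dashrightarrow E$, which is impossible since every rational curve in $E$ is constant. So one cannot pass from ``$G$ is a commutative algebraic group birational to $V$'' to the W-type conclusion by matching decompositions; the whole content of the theorem is in ruling out (or correctly handling) such hidden $\mathbb{G}_a$-directions, which is exactly what the condition (D) and Proposition 5.1 of \cite{abe1} are about, and your sketch supplies no mechanism for it. By contrast, the extension step you single out as the main obstacle is comparatively harmless: once $X$ is identified with an algebraic group of the stated product form, its rational functions extend to any completion, in particular to $\overline{X}$. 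Two smaller points: the passage from the generic compatibility $F(z+w)=F(z)\cdot F(w)$ to a globally defined holomorphic homomorphism $\mathbb{C}^n\to G$ with kernel exactly $\Gamma_K$ needs an argument (a priori $F$ is only meromorphic and $\bar F$ only generically finite); and note that an injective holomorphic homomorphism between equidimensional connected complex Lie groups is automatically surjective, so your ``Zariski-dense image'' formulation already conceals the fact that you would be asserting $X\cong G$ as complex Lie groups, which is a much stronger statement than the alignment you then try to carry out.
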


The above theorem means that a {\rm W}-type subfield is
essentially maximal in the subfields which admit {\rm (AAT)}
and the condition {\rm (T)}.

\section{Isogeny}
Let $X = {\mathbb C}^n/\Gamma $ and $X' = {\mathbb C}^{n'}/\Gamma '$
be connected commutative complex Lie groups with
${\rm rank}\; \Gamma = r$ and ${\rm rank}\; \Gamma '= r'$ respectively.
We denote by ${\mathbb R}_{\Gamma }^{r}$ and 
${\mathbb R}_{\Gamma '}^{r'}$ the real linear subspaces spanned by
$\Gamma $ and $\Gamma '$ respectively. The complex dimension of the
complex linear subspace spanned by $\Gamma $ is said to be the complex
rank of $\Gamma $. We denote it by ${\rm rank}_{{\mathbb C}}\Gamma $.
If $\left({\mathbb R}_{\Gamma }^{r}\right)^{{\mathbb C}}$ is the
complexification of ${\mathbb R}_{\Gamma }^{r}$, then
${\rm rank}_{{\mathbb C}}\Gamma = \dim \left({\mathbb R}_{\Gamma }^{r}\right)^{{\mathbb C}}.$
\par
We say that a mapping $\varphi : X \longrightarrow X'$ is a homomorphism
between $X$ and $X'$ if it is holomorphic and a group homomorphism.
For any homomorphism $\varphi : X \longrightarrow X'$ there
uniquely exists a ${\mathbb C}$-linear mapping $\Phi : {\mathbb C}^n
\longrightarrow {\mathbb C}^{n'}$ with $\Phi (\Gamma ) \subset \Gamma '$
such that $\sigma ' \circ \Phi = \varphi \circ \sigma ,$ where
$\sigma : {\mathbb C}^n \longrightarrow X$ and $\sigma ' :
{\mathbb C}^{n'} \longrightarrow X'$ are the projections (for example, see
Proposition 2.1.4 in \cite{abe4}). In this case $\Phi $ is called the
linear extension of $\varphi .$\par
Conversely, a ${\mathbb C}$-linear mapping $\Phi : {\mathbb C}^n
\longrightarrow {\mathbb C}^{n'}$ with $\Phi (\Gamma ) \subset \Gamma '$
defines a homomorphism $\varphi : X \longrightarrow X'$.\par
We denote by $M(k,\ell ; R)$ the set of all $(k,\ell )$-matrices
with coefficients in a ring $R$. When $k = \ell $, we write
$M(k; R) = M(k,k;R)$.\par
Let $\varphi : X = {\mathbb C}^n/\Gamma \longrightarrow
X' = {\mathbb C}^{n'}/\Gamma '$ be a homomorphism.
The linear extension $\Phi $ of $\varphi $ has the representative
matrix $M_{\Phi } \in M(n,n';{\mathbb C})$ with respect to
natural basis of ${\mathbb C}^n$ and ${\mathbb C}^{n'}$.
Let $\gamma _1, \dots , \gamma _r$ and $\gamma '_1, \dots ,\gamma '_{r'}$ be
generators of $\Gamma $ and $\Gamma '$ respectively. If we represent
$\gamma _i$ and $\gamma '_j$ as column vectors, then
$P:=(\gamma _1, \dots ,\gamma _r) \in M(n,r;{\mathbb C})$ and
$P' := (\gamma '_1, \dots , \gamma '_{r'}) \in M(n',r';{\mathbb C}).$
These matrices $P$ and $P'$ are called period matrices of $X$
and $X'$ (or $\Gamma $ and $\Gamma '$) respectively.
The condition $\Phi (\Gamma ) \subset \Gamma '$ is satisfied if and
only if there exists $A_{\Phi } \in M(r',r;{\mathbb Z})$ such that
\begin{equation}
M_{\Phi }P = P' A_{\Phi }.
\end{equation}

\begin{definition}
A homomorphism $\varphi : X \longrightarrow X'$ is said to be an
isogeny if it is surjective and the kernel ${\rm Ker}(\varphi )$ of
$\varphi $ is a finite group. The degree $\deg (\varphi )$ of
$\varphi $ is the number of elements of ${\rm Ker}(\varphi )$.
We say that $X$ and $X'$ are isogenous if there exists an isogeny
$\varphi : X \longrightarrow X'$.
\end{definition}

If $\varphi : X = {\mathbb C}^n/\Gamma \longrightarrow X' =
{\mathbb C}^{n'}/\Gamma '$ is an isogeny, then $n=n'$ and
the linear extension $\Phi $ of $\varphi $ is a ${\mathbb C}$-linear
isomorphism. The kernel ${\rm Ker}(\varphi )$ is a finite group
if and only if $\Gamma '/\Phi (\Gamma )$ is a finite group.
It is equivalent to
${\rm rank}\; \Gamma ' = {\rm rank}\; \Phi (\Gamma ) = {\rm rank}\; \Gamma $.
In this case, $M_{\Phi }$ and $A_{\Phi }$ satisfying (4.1) are
$M_{\Phi } \in GL(n,{\mathbb C})$ and $A_{\Phi } \in M(r;{\mathbb Z})$
with $\det A_{\Phi } \not= 0$. We may write
$$X = {\mathbb C}^n/\Gamma = {\mathbb C}^p \times ({\mathbb C}^{*})^q
\times {\mathcal T},\quad
X' = {\mathbb C}^n/\Gamma '= {\mathbb C}^{p'} \times ({\mathbb C}^{*})^{q'}
\times {\mathcal T}',$$
where ${\mathcal T} = {\mathbb C}^r/\Gamma _0$ and
${\mathcal T}' = {\mathbb C}^{r'}/\Gamma '_0$ are toroidal groups,
$n = p + q + r = p' + q' + r'$ and 
${\rm rank}\; \Gamma = q + {\rm rank}\; \Gamma _0 = q' +
{\rm rank}\; \Gamma '_0 = {\rm rank}\; \Gamma '$.
Let ${\rm rank}\; \Gamma _0 = r + s$ and ${\rm rank}\; \Gamma '_0 = r' + s'$.
Since $\Phi ({\mathbb R}_{\Gamma }^{q+r+s}) = {\mathbb R}_{\Gamma '}^{q'+r'+s'}$
and $\Phi : {\mathbb C}^n \longrightarrow {\mathbb C}^n$ is a ${\mathbb C}$-linear isomorphism,
we have $\Phi \left( \left( {\mathbb R}_{\Gamma }^{q+r+s}\right)^{{\mathbb C}}\right)
= \left( {\mathbb R}_{\Gamma '}^{q'+r'+s'}\right)^{{\mathbb C}}$.
It is obvious that ${\rm rank}_{{\mathbb C}}\Gamma = q + r$ and
${\rm rank}_{{\mathbb C}}\Gamma ' = q' + r'$. Then we obtain
$q + r = q' + r'$. Hence we have $p = p'$. If $\Phi ({\mathbb C}^p) \nsubseteq {\mathbb C}^p$,
then ${\rm Ker}(\varphi )$ is not a finite group. Therefore $\Phi ({\mathbb C}^p) \subset
{\mathbb C}^p$, hence $\Phi ({\mathbb C}^p) = {\mathbb C}^p$.
Thus we obtain the representation 
$\Phi = (\Phi _1, \Phi _2) : {\mathbb C}^p \times {\mathbb C}^{n-p}
\longrightarrow {\mathbb C}^p \times {\mathbb C}^{n-p}$, where
$\Phi _1 : {\mathbb C}^p \longrightarrow {\mathbb C}^p$ and
$\Phi _2 : {\mathbb C}^{n-p} \longrightarrow {\mathbb C}^{n-p}$
are ${\mathbb C}$-linear isomorphisms.

\begin{lemma}
Let $X = {\mathbb C}^n/\Gamma $ and $X' = {\mathbb C}^n/\Gamma '$ be
connected commutative complex Lie groups with
${\rm rank}_{{\mathbb C}}\Gamma = {\rm rank}_{{\mathbb C}}\Gamma ' = n$.
If $\varphi : X \longrightarrow X'$ is an isogeny, then the following
statements hold.\\
(1) $\deg (\varphi ) = |\det A_{\Phi }|$, where $\Phi $ is the linear
extension of $\varphi $.\\
(2) There exists an isogeny $\psi : X' \longrightarrow X$ such that
$\psi \circ \varphi = \alpha \cdot id_X$ and 
$\varphi \circ \psi = \alpha \cdot id_{X'}$, where $\alpha = \deg (\varphi )$.
\end{lemma}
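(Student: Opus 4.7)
The plan is to push everything to integer matrix arithmetic via the period matrices. The hypothesis ${\rm rank}_{{\mathbb C}}\Gamma = n$ forces $p=0$ in the Remmert--Morimoto decomposition of $X$, so the period matrix $P \in M(n,r;{\mathbb C})$ (with $r = {\rm rank}\;\Gamma$) induces an ${\mathbb R}$-linear isomorphism ${\mathbb R}^r \to {\mathbb R}_{\Gamma}^r$, and in particular a group bijection ${\mathbb Z}^r \to \Gamma$. Since $\varphi$ is an isogeny, ${\rm rank}\;\Gamma' = r$ as well, so $A_{\Phi} \in M(r;{\mathbb Z})$ with $\det A_{\Phi} \neq 0$, and the same properties hold for $P'$.

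For (1), I would start from the identification ${\rm Ker}(\varphi) = M_{\Phi}^{-1}(\Gamma')/\Gamma$. Rewriting (4.1) as $M_{\Phi}^{-1}P' = P A_{\Phi}^{-1}$ gives $M_{\Phi}^{-1}(\Gamma') = M_{\Phi}^{-1}P'{\mathbb Z}^r = P(A_{\Phi}^{-1}{\mathbb Z}^r)$, while $\Gamma = P{\mathbb Z}^r$. Injectivity of $P$ on ${\mathbb Q}^r$ then yields ${\rm Ker}(\varphi) \cong A_{\Phi}^{-1}{\mathbb Z}^r/{\mathbb Z}^r$; multiplication by $A_{\Phi}$ identifies this with ${\mathbb Z}^r/A_{\Phi}{\mathbb Z}^r$, whose order is $|\det A_{\Phi}|$ by the elementary divisor theorem, giving $\deg(\varphi) = |\det A_{\Phi}|$. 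For (2), I would define $\Psi := \alpha\Phi^{-1}$ as a ${\mathbb C}$-linear isomorphism. Its associated integer matrix is $\alpha A_{\Phi}^{-1} = \pm\det(A_{\Phi})A_{\Phi}^{-1} = \pm{\rm adj}(A_{\Phi}) \in M(r;{\mathbb Z})$, so $\Psi(\Gamma') \subset \Gamma$ and $\Psi$ descends to a homomorphism $\psi : X' \to X$; this $\psi$ is an isogeny since $\Psi$ is a linear isomorphism and $\Psi(\Gamma')$ still has rank $r$ inside $\Gamma$. The identities $\Psi\Phi = \Phi\Psi = \alpha I$ descend to $\psi\circ\varphi = \alpha\cdot{\rm id}_X$ and $\varphi\circ\psi = \alpha\cdot{\rm id}_{X'}$.

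The only delicate point is justifying the isomorphism ${\rm Ker}(\varphi) \cong A_{\Phi}^{-1}{\mathbb Z}^r/{\mathbb Z}^r$: this requires that $P : {\mathbb R}^r \to {\mathbb C}^n$ be injective, so that the quotients correspond cleanly. That injectivity is precisely what the hypothesis ${\rm rank}_{{\mathbb C}}\Gamma = n$ (equivalently, $p = 0$ in the Remmert--Morimoto decomposition) guarantees. Once that reduction is in place, both parts become routine linear algebra over ${\mathbb Z}$ and ${\mathbb C}$.
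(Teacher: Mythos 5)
Your proof is correct and takes essentially the same route as the paper: part (1) reduces to the elementary divisor theorem for $A_{\Phi}$ (the paper diagonalizes $A_{\Phi}$ by changing generators of $\Gamma$, you invoke $|{\mathbb Z}^r/A_{\Phi}{\mathbb Z}^r|=|\det A_{\Phi}|$), and part (2) uses exactly the paper's inverse $\Psi=\alpha M_{\Phi}^{-1}$, whose integer matrix is $\pm\,\mathrm{adj}(A_{\Phi})$. The only quibble is your attribution of the injectivity of $P$ on ${\mathbb R}^r$ to the hypothesis ${\rm rank}_{{\mathbb C}}\Gamma=n$: that injectivity already follows from discreteness of $\Gamma$ (a ${\mathbb Z}$-basis of a rank-$r$ discrete subgroup is ${\mathbb R}$-linearly independent), but this does not affect the argument.
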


\begin{proof}
Let ${\rm rank}\; \Gamma = {\rm rank}\; \Gamma '= r$.
If $P$ and $P'$ are period matrices of $X$ and $X'$ respectively, then there
exist $M_{\Phi } \in GL(n,{\mathbb C})$ and $A_{\Phi } \in M(r;{\mathbb Z})$
with $\det A_{\Phi } \not= 0$ such that
$M_{\Phi } P = P'A_{\Phi }$. Since we can take generators of $\Gamma $ such
that $A_{\Phi }$ is a diagonal matrix, the statement (1) is trivial.\par
Let $\alpha := |\det A_{\Phi }|$. Then $\alpha = \deg (\varphi )$ by (1).
We have $P(\alpha A_{\Phi }^{-1}) = \alpha M_{\Phi }^{-1}P'$. The matrix
$ \alpha M_{\Phi }^{-1}$ defines a ${\mathbb C}$-linear isomorphism
$\Psi : {\mathbb C}^n \longrightarrow {\mathbb C}^n$ with
$\Psi (\Gamma ') \subset \Gamma $. It is easily checked that $\Psi $
gives an isogeny $\psi : X' \longrightarrow X$ possessing the properties
in the statement (2).
\end{proof}

\begin{proposition}
Let $X = {\mathbb C}^p \times ({\mathbb C}^{*})^q \times {\mathcal T}$
and $X' = {\mathbb C}^{p'} \times ({\mathbb C}^{*})^{q'} \times {\mathcal T}'$ 
be connected commutative complex Lie groups, where ${\mathcal T}$ and
${\mathcal T}'$ are an $r$-dimensional toroidal group and an $r'$-dimensional
toroidal group respectively. If $\varphi : X \longrightarrow X'$ is an isogeny,
then $p = p'$, $q = q'$, $r = r'$ and
$\varphi = (\varphi _1, \varphi _2, \varphi _3) : {\mathbb C}^p \times
({\mathbb C}^{*})^q \times {\mathcal T} \longrightarrow {\mathbb C}^p
\times ({\mathbb C}^{*})^q \times {\mathcal T}'$, where
$\varphi _1 : {\mathbb C}^p \longrightarrow {\mathbb C}^p$ is a ${\mathbb C}$-linear
isomorphism, $\varphi _2 : ({\mathbb C}^{*})^q \longrightarrow ({\mathbb C}^{*})^q$
and $\varphi _3 : {\mathcal T} \longrightarrow {\mathcal T}'$ are isogenies.
Furthermore, $\deg (\varphi ) = \deg (\varphi _2) \deg (\varphi _3)$.
\end{proposition}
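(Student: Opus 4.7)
The plan is to combine the linear-splitting argument already given before Lemma~1 with a ``maximal toroidal subgroup'' characterization to pin down the three summands of the Remmert--Morimoto decomposition as canonical, isogeny-invariant pieces of $X$ and $X'$. The $\mathbb{C}$-linear isomorphism $\Phi_1$ on the vector factor is already produced by that preceding discussion, yielding $p=p'$ and $\varphi_1:\mathbb{C}^p\to\mathbb{C}^p$ immediately.

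The second ingredient is the claim that $\mathcal{T}$ is the maximal connected toroidal subgroup of $X$, and analogously for $X'$. To prove this I would first verify that the algebraic factor $\mathbb{C}^a\times(\mathbb{C}^*)^b$ admits no nontrivial connected toroidal subgroup: any such subgroup lifts to a complex subspace $W$ whose intersection with the period lattice $\{0\}^a\times 2\pi i\mathbb{Z}^b$ would have to $\mathbb{C}$-span $W$; but that intersection sits inside the totally real slice $\{0\}^a\times i\mathbb{R}^b$, whose real span contains no nonzero complex subspace (since $i\mathbb{R}^b\cap\mathbb{R}^b=0$), so the invariant $m$ attached to such a putative toroidal subgroup would be $0$, contradicting the defining condition $m\geq 1$. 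Projecting an arbitrary connected toroidal $Y\subset X$ to $\mathbb{C}^p\times(\mathbb{C}^*)^q$ gives a toroidal image (homomorphic images of toroidal groups are toroidal), hence trivial, forcing $Y\subset\mathcal{T}$.

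With this in hand, $\varphi(\mathcal{T})$ is a connected toroidal subgroup of $X'$, therefore contained in $\mathcal{T}'$, and $\varphi|_{\mathcal{T}}$ has finite kernel. Applying the same reasoning to the inverse isogeny $\psi:X'\to X$ supplied by Lemma~2(2) gives $\psi(\mathcal{T}')\subset\mathcal{T}$; comparing dimensions ($\dim\varphi(\mathcal{T})=r$ and $\dim\psi(\mathcal{T}')=r'$) yields $r=r'$ and $\varphi(\mathcal{T})=\mathcal{T}'$, so $\varphi_3:=\varphi|_{\mathcal{T}}$ is an isogeny. Combined with $p=p'$ and the dimension equality $p+q+r=p'+q'+r'$, this forces $q=q'$. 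The component $\varphi_2:(\mathbb{C}^*)^q\to(\mathbb{C}^*)^q$ is then obtained as the isogeny induced on the quotient $X/(\mathbb{C}^p+\mathcal{T})\cong(\mathbb{C}^*)^q$, well-defined because $\varphi$ sends the subgroup $\mathbb{C}^p+\mathcal{T}$ of $X$ into $\mathbb{C}^p+\mathcal{T}'$ of $X'$.

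The degree formula follows from the multiplicativity of degrees along the short exact sequence $0\to\mathcal{T}\to X\to X/\mathcal{T}\to 0$ intertwined by $\varphi$, giving $\deg\varphi=\deg\varphi_3\cdot\deg\bar\varphi_0$, where $\bar\varphi_0$ is the induced isogeny on the algebraic quotient $\mathbb{C}^p\times(\mathbb{C}^*)^q$; a second application to $0\to\mathbb{C}^p\to\mathbb{C}^p\times(\mathbb{C}^*)^q\to(\mathbb{C}^*)^q\to 0$ yields $\deg\bar\varphi_0=\deg\varphi_1\cdot\deg\varphi_2=\deg\varphi_2$, since $\varphi_1$ is a $\mathbb{C}$-linear isomorphism of degree $1$. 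The main obstacle I anticipate is the maximal-toroidal-subgroup argument of the second paragraph: one must handle the paper's definition of toroidal ($\mathrm{rank}\,\Gamma=n+m$ with $m\geq 1$) carefully, in particular verifying that no nonzero complex subspace can lie in a real span of lattice vectors confined to $i\mathbb{R}^b$, so that the algebraic factor contributes no toroidal subgroup.
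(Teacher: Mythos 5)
Your proposal is correct and follows essentially the same route as the paper's proof: the vector factor $\mathbb{C}^p$ is handled by the linear-algebra discussion preceding Lemma~2, the containment $\varphi(\mathcal{T})\subset\mathcal{T}'$ comes from the fact that a toroidal group admits no nonconstant map to the Stein factor, and the inverse isogeny from Lemma~2 upgrades this to $\varphi(\mathcal{T})=\mathcal{T}'$, giving $q=q'$ and $r=r'$. The only variations are that you spell out the maximality of $\mathcal{T}$ and the degree multiplicativity explicitly, you conclude $\varphi(\mathcal{T})=\mathcal{T}'$ by a dimension count rather than via $\alpha\mathcal{T}'=\mathcal{T}'$, and you realize $\varphi_2$ on the quotient $X/(\mathbb{C}^p+\mathcal{T})$ where the paper restricts $\varphi$ to the $(\mathbb{C}^*)^q$ factor.
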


\begin{proof}
We have already shown that $p = p'$, $q + r = q' + r'$ and $\varphi |_{{\mathbb C}^p}
: {\mathbb C}^p \longrightarrow {\mathbb C}^p$ is a ${\mathbb C}$-linear
isomorphism. Then, it suffices to consider an isogeny $\psi := \varphi |_{
({\mathbb C}^{*})^q \times {\mathcal T}} : ({\mathbb C}^{*})^q \times {\mathcal T}
\longrightarrow ({\mathbb C}^{*})^{q'} \times {\mathcal T}'$.\par
We set $Y := ({\mathbb C}^{*})^q \times {\mathcal T}$ and
$Y' := ({\mathbb C}^{*})^{q'} \times {\mathcal T}'$.
Since $\psi ({\mathcal T})$ is a toroidal group, we have 
$\psi ({\mathcal T}) \subset {\mathcal T}'$. By Lemma 2 there exists an
isogeny $\tau : Y' \longrightarrow Y$ such that $\psi \circ \tau = \alpha \cdot
id _{Y'}$ and $\tau \circ \psi = \alpha \cdot id _Y$, where $\alpha = \deg (\psi )$.
We also have $\tau ({\mathcal T}') \subset {\mathcal T}$. Then
$\alpha {\mathcal T}' \subset \psi ({\mathcal T}) \subset {\mathcal T}'$.
Therefore we obtain $\psi ({\mathcal T}) = {\mathcal T}'$ for $\alpha {\mathcal T}' = {\mathcal T}'$.
Thus we have $q = q'$ and $r = r'$. Both $\psi |_{({\mathbb C}^{*})^q} : ({\mathbb C}^{*})^q
\longrightarrow ({\mathbb C}^{*})^q$ and $\psi |_{{\mathcal T}} : {\mathcal T}
\longrightarrow {\mathcal T}'$ are isogenies. The statement for the degree of
$\varphi $ is trivial.
\end{proof}

\begin{remark}
Assume that toroidal groups ${\mathcal T}$ and ${\mathcal T}'$ are isogenous.
If either of them is a quasi-abelian variety of kind 0, then so is another one.
\end{remark}

\begin{proposition}
Let $X = {\mathbb C}^n/\Gamma $ and $X' = {\mathbb C}^n/\Gamma '$ be quasi-abelian
varieties of kind 0 with ${\rm rank}\; \Gamma = {\rm rank}\; \Gamma ' = n + m$.
They are principal $({\mathbb C}^{*})^{n-m}$-bundles
$\rho : X \longrightarrow {\mathbb A}$ and $\rho ' : X' \longrightarrow {\mathbb A}'$
over $m$-dimensional abelian varieties ${\mathbb A}$ and ${\mathbb A}'$ respectively.
If $\varphi : X \longrightarrow X'$ is an isogeny, then there exists an isogeny
$\varphi _{{\mathbb A}} : {\mathbb A} \longrightarrow {\mathbb A}'$ such that
$(\varphi , \varphi _{{\mathbb A}}) : (X,\rho , {\mathbb A}) \longrightarrow
(X',\rho ',{\mathbb A}')$ is a bundle morphism, where $\varphi _{{\mathbb A}}$ is 
given by the linear extension $\Phi $ of $\varphi $.
\end{proposition}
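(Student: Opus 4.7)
The strategy is to show that the $\mathbb{C}$-linear extension $\Phi$ of $\varphi$ carries the fiber subspace of $\rho$ onto that of $\rho'$, so that $\Phi$ descends to a $\mathbb{C}$-linear isomorphism of the universal covers of $\mathbb{A}$ and $\mathbb{A}'$, from which $\varphi_{\mathbb{A}}$ is read off as the induced map on the base lattices.

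First I would record two elementary facts about $\Phi$. Since $\Phi(\Gamma)$ has finite index in $\Gamma'$, the two subgroups span the same real subspace of $\mathbb{C}^n$, so $\Phi(\mathbb{R}_{\Gamma}^{n+m}) = \mathbb{R}_{\Gamma'}^{n+m}$. Because $\Phi$ is $\mathbb{C}$-linear it commutes with multiplication by $\sqrt{-1}$, and therefore
\[
\Phi(\mathbb{C}_{\Gamma}^{m})
= \Phi\bigl(\mathbb{R}_{\Gamma}^{n+m}\cap\sqrt{-1}\,\mathbb{R}_{\Gamma}^{n+m}\bigr)
= \mathbb{R}_{\Gamma'}^{n+m}\cap\sqrt{-1}\,\mathbb{R}_{\Gamma'}^{n+m}
= \mathbb{C}_{\Gamma'}^{m}.
\]
Thus $\Phi$ preserves the maximal complex subspace lying inside the real span of the period lattice.

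Next I would identify the fiber subspace of $\rho$ as a $\mathbb{C}$-linear subspace $W \subset \mathbb{C}^n$ of complex dimension $n-m$, complementary to $\mathbb{C}_{\Gamma}^{m}$, such that $W\cap\Gamma$ is a lattice of rank $n-m$ whose $\mathbb{R}$-span $W\cap\mathbb{R}_{\Gamma}^{n+m}$ is a totally real subspace of $W$ of dimension $n-m$; the fiber of $\rho$ is then $W/(W\cap\Gamma)\cong(\mathbb{C}^{*})^{n-m}$ and $\mathbb{A} = (\mathbb{C}^n/W)/\pi(\Gamma)$, where $\pi:\mathbb{C}^n\to\mathbb{C}^n/W$ is the projection. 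Let $W'$ play the analogous role for $\rho'$. The central step, and the main obstacle, is to show $\Phi(W)=W'$. The image $\Phi(W)$ is a $\mathbb{C}$-complement of $\Phi(\mathbb{C}_{\Gamma}^{m}) = \mathbb{C}_{\Gamma'}^{m}$, and $\Phi(W)\cap\Gamma'$ contains the rank-$(n-m)$ lattice $\Phi(W\cap\Gamma)$ whose $\mathbb{R}$-span is totally real in $\Phi(W)$; in other words $\Phi(W)$ satisfies every defining property of a fiber subspace for $X'$. The kind-0 hypothesis, together with the uniqueness of the maximal $(\mathbb{C}^{*})^{n-m}$-subgroup of a kind-0 quasi-abelian variety, then forces $\Phi(W)=W'$.

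Granting this fiber-preservation step, the rest is formal. The map $\Phi$ descends to a $\mathbb{C}$-linear isomorphism $\overline{\Phi}:\mathbb{C}^n/W\to\mathbb{C}^n/W'$, i.e. a linear isomorphism between the $m$-dimensional universal covers of $\mathbb{A}$ and $\mathbb{A}'$. It sends the lattice $\pi(\Gamma)$ of $\mathbb{A}$ to $\pi'(\Phi(\Gamma))$, a finite-index subgroup of the lattice $\pi'(\Gamma')$ of $\mathbb{A}'$, because $[\Gamma':\Phi(\Gamma)]<\infty$. Hence $\overline{\Phi}$ descends to a surjective holomorphic homomorphism $\varphi_{\mathbb{A}}:\mathbb{A}\to\mathbb{A}'$ with finite kernel, which is the required isogeny; it is given by $\Phi$ on universal covers, as claimed. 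The commutativity $\rho'\circ\varphi = \varphi_{\mathbb{A}}\circ\rho$ follows immediately from the construction, so $(\varphi,\varphi_{\mathbb{A}})$ is the desired bundle morphism.
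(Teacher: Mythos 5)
Your overall strategy is the same as the paper's: show that the linear extension $\Phi$ carries $\mathbb{C}_{\Gamma}^{m}$ onto $\mathbb{C}_{\Gamma'}^{m}$ and the fiber subspace onto the fiber subspace, then read $\varphi_{\mathbb{A}}$ off from $\Phi$ on the base. The first half is correct and is exactly the paper's argument ($\Phi(\Gamma)$ has finite index in $\Gamma'$, hence $\Phi(\mathbb{R}_{\Gamma}^{n+m})=\mathbb{R}_{\Gamma'}^{n+m}$, hence $\Phi(\mathbb{C}_{\Gamma}^{m})=\mathbb{C}_{\Gamma'}^{m}$ by $\mathbb{C}$-linearity), and the final descent is routine once fiber-preservation is granted.

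The gap is in the step you yourself single out as the main obstacle. You justify $\Phi(W)=W'$ by ``the uniqueness of the maximal $(\mathbb{C}^{*})^{n-m}$-subgroup of a kind-0 quasi-abelian variety,'' but no such uniqueness holds. Take $X=\mathbb{C}^{2}/\Gamma$ with $\Gamma=2\pi\sqrt{-1}\,\mathbb{Z}e_{1}+\Lambda e_{2}$, i.e.\ $X=\mathbb{C}^{*}\times E$ with $E=\mathbb{C}/\Lambda$ an elliptic curve; this is a quasi-abelian variety of kind 0 with $\mathbb{C}_{\Gamma}^{1}=\{0\}\times\mathbb{C}$. Besides $\mathbb{C}e_{1}$, the subspace $\mathbb{C}(e_{1}+ce_{2})$ with $2\pi\sqrt{-1}c\in\Lambda$, $c\neq 0$, also satisfies every one of your defining conditions for a fiber subspace: it is a complex complement of $\mathbb{C}_{\Gamma}^{1}$, it meets $\Gamma$ in a rank-one lattice with totally real span, and it descends to a closed subgroup isomorphic to $\mathbb{C}^{*}$ with quotient $E$. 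Correspondingly, the automorphism $(z_{1},z_{2})\mapsto(z_{1},z_{2}+cz_{1})$ of $X$ is an isogeny that does \emph{not} preserve the subspace $\mathbb{C}e_{1}$, so fiber-preservation cannot be deduced from the listed properties alone. The paper avoids this by not fixing $W'$ independently of $\varphi$: it chooses the real complement $W'$ of $\mathbb{C}_{\Gamma'}^{m}$ in $\mathbb{R}_{\Gamma'}^{n+m}$ to be, in effect, $\Phi(W)$, which is legitimate because $\Phi(\mathbb{R}_{\Gamma}^{n+m})=\mathbb{R}_{\Gamma'}^{n+m}$ and any such complement yields a standard bundle presentation $\rho':X'\longrightarrow\mathbb{A}'$; the equality $\Phi(W\oplus\sqrt{-1}W)=W'\oplus\sqrt{-1}W'$ then holds by construction and $\varphi_{\mathbb{A}}$ is induced by $\Phi|_{\mathbb{C}_{\Gamma}^{m}}$. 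Your argument is repaired in the same way: define $W':=\Phi(W)$ (equivalently, adapt the bundle structure on $X'$ to $\varphi$) instead of trying to prove that $\Phi(W)$ coincides with a prescribed fiber subspace.
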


\begin{proof}
Let $W$ and $W'$ be the real linear subspaces with
${\mathbb R}_{\Gamma }^{n+m} = {\mathbb C}_{\Gamma }^m \oplus W$
and ${\mathbb R}_{\Gamma '}^{n+m} = {\mathbb C}_{\Gamma ' }^m \oplus W'$
respectively. The linear extension $\Phi $ of $\varphi $ has the properties
$\Phi ({\mathbb C}_{\Gamma }^m) = {\mathbb C}_{\Gamma'}^m$ and
$\Phi (W \oplus \sqrt{-1}W) = W' \oplus \sqrt{-1}W'$ as shown above.
Let $\sigma : {\mathbb C}^n \longrightarrow {\mathbb C}_{\Gamma }^m$ and
$\sigma' : {\mathbb C}^n \longrightarrow {\mathbb C}_{\Gamma'}^m$ be
projections. Then we have ${\mathbb A} = {\mathbb C}_{\Gamma}^m/\sigma(\Gamma)$
and ${\mathbb A}' = {\mathbb C}_{\Gamma'}^m/\sigma'(\Gamma')$.
Hence $\Phi |_{{\mathbb C}_{\Gamma}^m} : {\mathbb C}_{\Gamma}^m
\longrightarrow {\mathbb C}_{\Gamma'}^m$ gives an isogeny
$\varphi _{{\mathbb A}} : {\mathbb A} \longrightarrow {\mathbb A}'$ such that
$(\varphi ,\varphi _{{\mathbb A}}) : (X, \rho ,{\mathbb A}) \longrightarrow
(X', \rho', {\mathbb A}')$ is a bundle morphism.
\end{proof}

We recently showed in \cite{abe3} that any quasi-abelian variety is isogenous
to a product of geometrically simple quasi-abelian subvarieties. Here we say
that a toroidal group is geometrically simple if it does not contain a closed
toroidal subgroup apart from itself and zero (Definition 1 in \cite{abe3}).
We note that a simple complex torus is a geometrically simple toroidal group.
Unfortunately, we do not have a proof for the uniqueness of such a product
up to isogeny yet. We give its proof here. First we need the following lemma.

\begin{lemma}
Let $X = {\mathbb C}^n/\Gamma $ be a toroidal group with
${\rm rank}\; \Gamma = n + m$, which is a principal $({\mathbb C}^{*})^{n-m}$-bundle
$\rho : X \longrightarrow {\mathbb T}$ over an $m$-dimensional complex torus
${\mathbb T}$. Then $X$ is geometrically simple if and only if ${\mathbb T}$
is simple.
\end{lemma}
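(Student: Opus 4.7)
The plan is to relate closed toroidal subgroups of $X$ to subtori of $\mathbb{T}$ via the bundle projection $\rho$, proving both implications by contrapositive.

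For the implication that $X$ geometrically simple forces $\mathbb{T}$ simple, I would suppose $\mathbb{T}_{0} \subsetneq \mathbb{T}$ is a proper nontrivial subtorus and construct a proper nontrivial closed toroidal subgroup of $X$. The natural candidate is $X_{0} := \rho^{-1}(\mathbb{T}_{0})$, a closed connected subgroup of $X$ of complex dimension $\dim \mathbb{T}_{0} + (n-m) < n$. Applying the Remmert-Morimoto decomposition, $X_{0} \cong \mathbb{C}^{p'} \times (\mathbb{C}^{*})^{q'} \times \mathcal{T}'$ with $\mathcal{T}'$ toroidal. A direct lattice computation shows that the lattice of $X_{0}$ has full complex span in the ambient complex space (so $p' = 0$) and that the maximal complex subspace of the real span of the lattice has complex dimension at least $\dim \mathbb{T}_{0} \geq 1$; this forces $\dim \mathcal{T}' \geq 1$. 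Via the Remmert-Morimoto isomorphism, $\mathcal{T}'$ embeds as a closed subgroup of $X_{0}$, hence of $X$, giving a proper nontrivial closed toroidal subgroup of $X$ and contradicting geometric simplicity.

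For the converse, suppose $X_{0} \subsetneq X$ is a proper nontrivial closed toroidal subgroup, and examine $\rho(X_{0}) \subset \mathbb{T}$, which is a connected closed complex subgroup, hence a subtorus. If $\rho(X_{0}) = 0$, then $X_{0} \subset \ker\rho = (\mathbb{C}^{*})^{n-m}$; but any nontrivial subgroup of $(\mathbb{C}^{*})^{n-m}$ has some coordinate projection $t_{i}$ that is nonconstant on it, contradicting that $X_{0}$ is toroidal. If $\rho(X_{0}) = \mathbb{T}$, set $X_{1} := X_{0} \cap \ker\rho$, so that $X_{0}/X_{1} \cong \mathbb{T}$. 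The map $\rho$ descends to $\bar\rho : X/X_{1} \to \mathbb{T}$ with kernel $Q := (\mathbb{C}^{*})^{n-m}/X_{1}$, and the inclusion $X_{0}/X_{1} \hookrightarrow X/X_{1}$ is a holomorphic section of $\bar\rho$; consequently the principal $Q$-bundle $\bar\rho$ is trivial, giving $X/X_{1} \cong \mathbb{T} \times Q$. The identity component of $X_{1}$ is a closed connected complex Lie subgroup of $(\mathbb{C}^{*})^{n-m}$, and via the exponential such a subgroup is an algebraic subtorus $(\mathbb{C}^{*})^{k}$; hence $Q \cong (\mathbb{C}^{*})^{n-m-k}$. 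If $k < n-m$ this $Q$ has nonconstant holomorphic coordinate functions, and so does $X/X_{1}$; but $X/X_{1}$ is itself toroidal, because any nonconstant holomorphic function on the quotient would pull back via the surjective quotient map to a nonconstant holomorphic function on the toroidal $X$. Hence $k = n-m$, so $X_{1} = \ker\rho$, $X_{0} \supset \ker\rho$, and $\rho(X_{0}) = \mathbb{T}$ forces $X_{0} = X$, contradicting properness. Thus $\rho(X_{0})$ is a proper nontrivial subtorus of $\mathbb{T}$, so $\mathbb{T}$ is not simple.

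The main obstacle is the case $\rho(X_{0}) = \mathbb{T}$ in the converse direction, where one must rule out a proper toroidal $X_{0}$ surjecting onto the entire base torus. The argument combines two structural facts: quotients of toroidal groups by closed complex Lie subgroups remain toroidal, and closed connected complex Lie subgroups of $(\mathbb{C}^{*})^{n-m}$ are themselves algebraic subtori. Together these ensure that the cokernel $Q$ is never a nontrivial toroidal object but always an algebraic torus $(\mathbb{C}^{*})^{n-m-k}$, whose coordinate characters on the product $\mathbb{T}\times Q$ produce the contradiction once the section from $X_{0}/X_{1}$ has been exhibited.
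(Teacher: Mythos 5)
Your first implication (if $X$ is geometrically simple then ${\mathbb T}$ is simple) is correct and is essentially the paper's argument: both take $X_0=\rho^{-1}({\mathbb T}_0)$, apply the Remmert--Morimoto decomposition, and extract a nontrivial toroidal factor. The paper gets $\dim{\mathcal T}'\geq\dim{\mathbb T}_0\geq 1$ from the bound $p'+q'\leq n-m$ on closed Stein subgroups of $X$; your lattice count (the lattice of $\rho^{-1}({\mathbb T}_0)$ has rank $2k+(n-m)$ in a $(k+n-m)$-dimensional complex space, so the maximal complex subspace of its real span has dimension $\geq k$) reaches the same conclusion and is fine.

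The converse direction, however, has a genuine gap: the ``structural fact'' that every closed connected complex Lie subgroup of $({\mathbb C}^{*})^{N}$ is an algebraic subtorus is false. For instance $H=\{(e^{z},e^{\sqrt{-1}z})\,;\,z\in{\mathbb C}\}\subset({\mathbb C}^{*})^{2}$ is a closed connected complex Lie subgroup isomorphic to ${\mathbb C}$ (closedness holds because ${\mathbb Z}+\sqrt{-1}{\mathbb Z}$ is discrete), and $({\mathbb C}^{*})^{2}/H\cong{\mathbb C}/({\mathbb Z}+\sqrt{-1}{\mathbb Z})$ is an elliptic curve. In general a closed connected subgroup of $({\mathbb C}^{*})^{N}$ is only of the form ${\mathbb C}^{a}\times({\mathbb C}^{*})^{k}$, so your $Q=({\mathbb C}^{*})^{n-m}/X_{1}$ may be a complex torus or a toroidal group; in exactly that case $Q$ has no nonconstant holomorphic functions, ${\mathbb T}\times Q$ is again toroidal, and the contradiction you want (``$Q$ has nonconstant coordinate characters'') evaporates, so you cannot conclude $X_{1}=\ker\rho$. (A secondary issue: $\rho(X_{0})$ need not be closed in ${\mathbb T}$ a priori, so your trichotomy and the section argument require the closure $\overline{\rho(X_{0})}$ and an extra surjectivity check.) The paper avoids all of this by using the intrinsic structure of $X_{0}$ itself: since $X_{0}={\mathbb C}^{n_{0}}/\Gamma_{0}$ is toroidal, its own base torus ${\mathbb T}_{0}={\mathbb C}^{m_{0}}_{\Gamma_{0}}/\sigma_{0}(\Gamma_{0})$ is nontrivial ($m_{0}\geq 1$), and $\Gamma_{0}\subset\Gamma$ gives ${\mathbb C}^{m_{0}}_{\Gamma_{0}}\subset{\mathbb C}^{m}_{\Gamma}$, exhibiting ${\mathbb T}_{0}$ as a subtorus of ${\mathbb T}$; simplicity forces ${\mathbb T}_{0}={\mathbb T}$ and hence $X_{0}=X$. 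You should either adopt that route or supply a proof that $X_{0}\cap\ker\rho$ really is an algebraic subtorus in your specific situation, which is not obvious.
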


\begin{proof}
We assume that $X$ is geometrically simple. If ${\mathbb T}$ is not simple,
then there exists a non-zero proper subtorus ${\mathbb T}_0$ of ${\mathbb T}$.
Let $k := \dim {\mathbb T}_0$. Then $\rho ^{-1}({\mathbb T}_0)$ is a closed
complex Lie subgroup of $X$ with $\dim \rho ^{-1}({\mathbb T}_0) = k + n - m$.
By the Remmert-Morimoto theorem we have
$$\rho ^{-1}({\mathbb T}_0) \cong {\mathbb C}^p \times ({\mathbb C}^{*})^q
\times X_0,$$
where $X_0$ is an $r$-dimensional toroidal group and $p + q + r = k + n - m$.
Since the maximal dimension of a closed Stein subgroup contained in $X$
is $n - m$ (Proposition 1.1.17 in \cite{abe-kopfermann} or Proposition 2.6.1
in \cite{abe4}), we have $p + q \leqq n - m$. Then we obtain
$r \geqq k \geqq 1$. On the other hand, we have $r \leqq k + n - m < n$.
Then $X_0$ is a non-trivial closed toroidal subgroup of $X$. This contradicts
our assumption.\par
Conversely, we assume that ${\mathbb T}$ is simple. Suppose that
$X$ contains a closed toroidal subgroup $X_0 = {\mathbb C}^{n_0}/\Gamma_0$
with $1 \leqq n_0 < n$. We can represent $X_0$ as a principal
$({\mathbb C}^{*})^{n_0 - m_0}$-bundle
$\rho_0 : X_0 \longrightarrow {\mathbb T}_0$ over an $m_0$-dimensional
complex torus ${\mathbb T}_0$, where ${\rm rank}\; \Gamma_0 = n_0 + m_0$
with $1 \leqq m_0 \leqq n_0$. Let $\sigma : {\mathbb C}^n \longrightarrow
{\mathbb C}_{\Gamma}^m$ and $\sigma_0 : {\mathbb C}^{n_0} \longrightarrow
{\mathbb C}_{\Gamma_0}^{m_0}$ be projections.
Then ${\mathbb T} = {\mathbb C}_{\Gamma}^m/\sigma(\Gamma)$ and
${\mathbb T}_0 = {\mathbb C}_{\Gamma_0}^{m_0}/\sigma_0(\Gamma_0)$.
Since $\Gamma_0 \subset \Gamma$, we have ${\mathbb C}_{\Gamma_0}^{m_0}
\subset {\mathbb C}_{\Gamma}^m$ and
$\sigma_0(\Gamma_0) \subset \sigma(\Gamma)\cap {\mathbb C}_{\Gamma_0}^{m_0}$.
Therefore ${\mathbb T}_0$ is a subtorus of ${\mathbb T}$. By the assumption
we have ${\mathbb T}_0 = 0$ or ${\mathbb T}$. However, $X_0$ is a toroidal group.
Then ${\mathbb T}_0 = {\mathbb T}$, hence $X_0 = X$.
This is a contradiction.
\end{proof}

Let $X$ be a quasi-abelian variety of kind 0. By Theorem 3 in \cite{abe3}
$X$ is isogenous to a product $X_1 \times \cdots \times X_k$ of
geometrically simple quasi-abelian varieties. In this case, each $X_i$ is
a quasi-abelian variety of kind 0.

\begin{theorem}
Let $X$ be a quasi-abelian variety of kind 0 which is isogenous to a product
$X_1 \times \cdots \times X_k$, where $X_i$ is a geometrically simple quasi-abelian
variety of kind 0. We assume that $X$ is isogenous to another product
$Y_1 \times \cdots \times Y_{\ell}$ of geometrically simple quasi-abelian varieties
$Y_1, \dots , Y_{\ell}$. Then, $k = \ell$ and $X_i$ is isogenous to $Y_i$ for
$i = 1, \dots ,k$ after a suitable change of indices.
\end{theorem}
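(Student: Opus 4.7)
The strategy is to reduce via Proposition~3 and Lemma~3 to the classical Poincar\'e complete reducibility theorem for abelian varieties, and then to lift the resulting matching of simple abelian factors back to the quasi-abelian level.

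First, I would apply Proposition~3 to each of the two given isogenies. Since the underlying abelian variety of a product of quasi-abelian varieties of kind~$0$ is the product of the underlying abelian varieties, this yields
\begin{equation*}
\mathbb{A}_{X_1}\times\cdots\times\mathbb{A}_{X_k}\;\sim\;\mathbb{A}_{X}\;\sim\;\mathbb{A}_{Y_1}\times\cdots\times\mathbb{A}_{Y_\ell}.
\end{equation*}
By Lemma~3, each $\mathbb{A}_{X_i}$ and each $\mathbb{A}_{Y_j}$ is a simple abelian variety, so the classical Poincar\'e reducibility theorem for abelian varieties (Krull--Schmidt in the isogeny category of abelian varieties) gives $k=\ell$ and, after suitable reindexing, an isogeny $\mathbb{A}_{X_i}\sim\mathbb{A}_{Y_i}$ for every~$i$.

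To lift this matching to the full quasi-abelian factors, I would fix an isogeny $\varphi:\prod_iX_i\to\prod_jY_j$ together with a quasi-inverse $\psi$ from Lemma~2 satisfying $\psi\circ\varphi=\alpha\cdot\mathrm{id}$. Writing the matrix entries $\varphi_{ij}:=\pi_j\circ\varphi\circ\iota_i:X_i\to Y_j$ and $\psi_{ji}:Y_j\to X_i$, one has
\begin{equation*}
\sum_{j=1}^{k}\psi_{ji}\circ\varphi_{ij}\;=\;\alpha\cdot\mathrm{id}_{X_i},
\end{equation*}
which passes via Proposition~3 to the analogous identity on underlying abelian varieties. Since Hom between non-isogenous simple abelian varieties vanishes, only the terms with $\mathbb{A}_{Y_j}\sim\mathbb{A}_{X_i}$ survive; a Schur-type argument then exhibits, for each~$i$, an index $j=\pi(i)$ for which $(\varphi_{i,\pi(i)})_{\mathbb{A}}:\mathbb{A}_{X_i}\to\mathbb{A}_{Y_{\pi(i)}}$ is an isogeny. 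The theorem would then follow by an induction on~$k$ that peels off the factor $X_i$ and reduces to a smaller product via cancellation.

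The main obstacle, and where the delicate work lies, is to upgrade ``$(\varphi_{i,\pi(i)})_{\mathbb{A}}$ is an isogeny'' to ``$\varphi_{i,\pi(i)}:X_i\to Y_{\pi(i)}$ is itself an isogeny of quasi-abelian varieties.'' This is subtle because the isogeny class of a quasi-abelian variety of kind~$0$ is strictly finer than that of its underlying abelian variety: different principal $(\mathbb{C}^*)^a$-bundles over a common base need not be isogenous, and the fiber dimensions $n_{X_i}-m_{X_i}$ and $n_{Y_{\pi(i)}}-m_{Y_{\pi(i)}}$ are not a priori equal. I would handle this by applying Proposition~3 to $\varphi_{i,\pi(i)}$, which supplies a bundle morphism whose base component is the isogeny on abelian parts; the $\alpha\cdot\mathrm{id}$ relation restricted to the fibers, together with the geometric simplicity of $X_i$ and $Y_{\pi(i)}$, then forces finiteness of the kernel on the $(\mathbb{C}^*)$-part and equality of fiber dimensions. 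A global dimension count using $\sum_i(n_{X_i}-m_{X_i})=\dim X-\dim\mathbb{A}_{X}=\sum_i(n_{Y_i}-m_{Y_i})$ ensures the overall consistency of the pairing across all factors.
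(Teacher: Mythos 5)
Your reduction is exactly the one the paper uses: pass to the abelian bases via the bundle-morphism property of isogenies (this is Proposition~2 of the paper, not Proposition~3, which concerns period groups of function fields), invoke Lemma~3 to see that each base ${\mathbb A}_{X_i}$, ${\mathbb B}_{Y_j}$ is simple, and apply the classical uniqueness of the decomposition of an abelian variety into simple factors to get $k=\ell$ and ${\mathbb A}_{X_i}\sim{\mathbb B}_{Y_i}$ after reindexing. Where you differ is at the end: the paper disposes of the remaining step in a single sentence (``In this case, $X_i$ and $Y_i$ are isogenous''), whereas you introduce the component maps $\varphi_{ij}$, a Schur-type argument on the bases, and an explicit discussion of how to lift the base isogeny to an isogeny of the $({\mathbb C}^*)$-bundles. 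You are right that this lift is the only content beyond the abelian case, and your two-sided dimension count is the correct mechanism; but as written your justification leans on geometric simplicity in a place where it does not act. Geometric simplicity excludes proper closed toroidal subgroups, not closed Stein subgroups, so it does not by itself prevent $\varphi_{i,\pi(i)}$ from having a positive-dimensional kernel inside the fibre $({\mathbb C}^*)^{n_i-m_i}$. What simplicity does give you is surjectivity: the image of $\varphi_{i,\pi(i)}$ is a closed connected subgroup of $Y_{\pi(i)}$ mapping onto the simple base, so by the Remmert--Morimoto structure of closed subgroups its toroidal part must be all of $Y_{\pi(i)}$. Finiteness of the kernel then has to come from the equality of fibre dimensions, which you obtain only after combining surjectivity of $\varphi_{i,\pi(i)}$ with surjectivity of $\psi_{\pi(i),i}$ and summing over $i$ (and this in turn requires your matching $\pi$ to be a bijection, which needs a word in the Schur step). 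With those points made explicit your argument closes a step that the paper's own proof leaves entirely implicit.
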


\begin{proof}
Each $X_i$ is a principal $({\mathbb C}^{*})^{n_i - m_i}$-bundle over an
$m_i$-dimensional abelian variety ${\mathbb A}_i$ or ${\mathbb A}_i$ itself.
Similarly, $Y_j$ is a principal bundle over an $m'_j$-dimensional abelian
variety ${\mathbb B}_j$ or ${\mathbb B}_j$ itself. Then
$X_1 \times \cdots \times X_k$ and $Y_1 \times \cdots \times Y_{\ell}$ are
principal bundles over ${\mathbb A}_1 \times \cdots \times {\mathbb A}_k$
and ${\mathbb B}_1 \times \cdots \times {\mathbb B}_{\ell}$ respectively.
Since $X_1 \times \cdots \times X_k$ and $Y_1 \times \cdots \times Y_{\ell}$ are
isogenous, ${\mathbb A}_1 \times \cdots \times {\mathbb A}_k$
and ${\mathbb B}_1 \times \cdots \times {\mathbb B}_{\ell}$ are isogenous by
Proposition 2. By Lemma 3, ${\mathbb A}_i$ and ${\mathbb B}_j$ are simple.
Therefore, $k = \ell$ and ${\mathbb A}_i$ is isogenous to ${\mathbb B}_i$ for
$i = 1, \dots , k$ after a suitable change of indices, by a classical result for
abelian varieties. In this case,  $X_i$ and $Y_i$ are isogenous.
\end{proof}

\section{$D$-closed subfields}
Let $(z_1, \dots , z_n)$ be complex coordinates of
${\mathbb C}^n$. We consider the following system of
Briot-Bouquet type partial differential equations for
meromorphic functions $f_1, \dots ,f_n \in {\mathfrak M}(
{\mathbb C}^n)$ which are algebraically independent over
${\mathbb C}$:\par
\medskip
\noindent
For any $i,j = 1, \dots ,n$ there exists an irreducible
polynomial $P_{ij}$ such that
\begin{equation}
P_{ij}\left( \frac{\partial f_i}{\partial z_j}, f_1, \dots ,
f_n\right) = 0
\end{equation}
if $\partial f_i/\partial z_j$ is not constant.

\begin{definition}
Let $K$ be a subfield of ${\mathfrak M}({\mathbb C}^n)$
satisfying the condition {\rm (T)}. We say that $K$ is
$D$-closed if $\partial f/\partial z_i \in K$ $(i=1, \dots ,n)$
for any $f \in K$.
\end{definition}

\begin{lemma}
Let $K$ be a subfield of the rational function field
${\mathbb C}(z_1, \dots ,z_n)$ satisfying the condition {\rm (T)}.
If $K$ is $D$-closed, then $K = {\mathbb C}(z_1, \dots ,z_n)$.
\end{lemma}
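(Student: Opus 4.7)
The plan is to reduce the general $n$-variable statement to the one-variable case, then handle the one-variable case by a residue argument on $\mathbb{P}^{1}$. Since $K$ and $L := \mathbb{C}(z_{1},\dots,z_{n})$ both have transcendence degree $n$ over $\mathbb{C}$ and are finitely generated, $L/K$ is finite algebraic. Fix $i$ and let
\[
p_{i}(T) = T^{d} + a_{d-1}T^{d-1} + \cdots + a_{0} \in K[T]
\]
be the minimal polynomial of $z_{i}$ over $K$. Applying $\partial_{j} := \partial/\partial z_{j}$ to the identity $p_{i}(z_{i})=0$ for any $j\neq i$ (so that $\partial_{j}z_{i}=0$) gives
\[
\sum_{k=0}^{d-1}(\partial_{j}a_{k})\,z_{i}^{k} = 0,
\]
a polynomial relation of degree less than $d$ in $z_{i}$ whose coefficients lie in $K$ by $D$-closedness. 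Minimality of $p_{i}$ forces $\partial_{j}a_{k}=0$ for every $j\neq i$ and every $k$, so each $a_{k}$ depends only on $z_{i}$; that is, $a_{k}\in K\cap \mathbb{C}(z_{i}) =: K_{i}$. When $d\geq 2$, not all $a_{k}$ can be constants (otherwise $z_{i}$ would be algebraic over $\mathbb{C}$), so $K_{i}\supsetneq \mathbb{C}$ has transcendence degree $1$, and $K_{i}$ is $D$-closed with respect to $d/dz_{i}$ since the derivative of $f\in K_{i}\subset K$ lies in $K$ by assumption and in $\mathbb{C}(z_{i})$ by direct differentiation. It therefore suffices to establish the one-variable statement: applied to $K_{i}\subset \mathbb{C}(z_{i})$ it gives $K_{i}=\mathbb{C}(z_{i})$, whence $z_{i}\in K$ for every $i$.

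For the one-variable case, L\"uroth's theorem yields $K_{i}=\mathbb{C}(h(z))$ for some non-constant $h\in \mathbb{C}(z)$, and $D$-closedness provides $R\in \mathbb{C}(u)$ with $h'(z)=R(h(z))$. Set $\eta := du/R(u)$, a meromorphic one-form on $\mathbb{P}^{1}_{u}$; the differential relation rewrites as $h^{*}\eta = dz$. Using the residue pullback formula
\[
\mathrm{Res}_{z_{0}}(h^{*}\eta) = e_{z_{0}}\,\mathrm{Res}_{h(z_{0})}(\eta),
\]
where $e_{z_{0}}$ is the ramification index of $h$ at $z_{0}$, together with the fact that $dz$ has zero residue at every point of $\mathbb{P}^{1}_{z}$, the surjectivity of $h$ forces $\mathrm{Res}_{u_{0}}(\eta)=0$ for every $u_{0}\in \mathbb{P}^{1}_{u}$. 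Consequently $\eta$ admits a rational primitive $H\in \mathbb{C}(u)$, and $d(H\circ h)=h^{*}\eta = dz$ yields $H(h(z))=z+c$ for some constant $c$. Thus $h$ has a rational left inverse, so $\deg h = 1$ and $K_{i}=\mathbb{C}(z)$.

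The main obstacle is the one-variable core, specifically the reinterpretation of $h'=R(h)$ as the pullback identity $h^{*}\eta = dz$ and the passage from vanishing residues of $\eta$ to a rational primitive, which in turn forces $\deg h=1$. The algebraic reduction from $n$ to $1$ variable is then routine, its essential input being $D$-closedness applied to the coefficients of the minimal polynomial. A small care is needed in the residue argument at $u=\infty$, handled by passing to the local coordinate $v=1/u$; the residue theorem on $\mathbb{P}^{1}$ then makes the argument tight.
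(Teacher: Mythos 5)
Your proposal is correct, but the core of your argument is genuinely different from the paper's. For the reduction to one variable you take the minimal polynomial $p_i$ of $z_i$ over $K$, apply $\partial_j$ for $j\neq i$, and use minimality plus $D$-closedness to conclude that the coefficients $a_k$ lie in $K_i:=K\cap\mathbb{C}(z_i)$; this has the added benefit of actually justifying that ${\rm Trans}_{\mathbb{C}}K_i=1$, a point the paper's proof asserts without comment (it sets up the same $K_i$ but does not explain why it is bigger than $\mathbb{C}$). For the one-variable core the paper stays entirely algebraic: it differentiates the minimal polynomial $P(z_i)=0$ of $z_i$ over $K_i$ with respect to $z_i$, notes that the resulting degree-$(N-1)$ relation must vanish identically, reads off $N+A_{N-1}'=0$, and integrates to get $A_{N-1}=-Nz_i+c\in K_i$, a contradiction in three lines. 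You instead invoke L\"uroth to write $K_i=\mathbb{C}(h)$, convert $h'=R(h)$ into the pullback identity $h^{*}\bigl(du/R(u)\bigr)=dz$, kill all residues of $du/R(u)$ via surjectivity of $h$ and the ramification formula, produce a rational primitive $H$, and conclude $\deg h=1$ from $H\circ h=z+c$. All steps check out (including the treatment of $u=\infty$ and the fact that vanishing residues at finite poles suffice for a rational primitive), but the machinery is considerably heavier than needed; what it buys is a more geometric picture of why no proper differentially closed subfield of $\mathbb{C}(z)$ of transcendence degree one can exist, while the paper's minimal-polynomial trick is shorter and is reused almost verbatim in its Lemma 5 for the exponential case.
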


\begin{proof}
For any $i = 1, \dots , n$ we set $K_i := K \cap {\mathbb C}(z_i)$.
Then $K_i$ is closed by derivative and ${\rm Trans}_{{\mathbb C}}K_i = 1$.
It suffices to show $z_i \in K_i$. Suppose that $z_i \notin K_i$.
Then $z_i$ is algebraic over $K_i$. Let
$$P(T) = T^N + A_{N-1}T^{N-1} + \cdots + A_1T + A_0 \in K_i[T]$$
be the minimal polynomial of $z_i$ over $K_i$. We have $N\geqq 2$
by the assumption. Differentiating $P(z_i)$ by $z_i$, we obtain
$$(N + A'_{N-1})z_i^{N-1} + \{ (N-1)A_{N-1} + A'_{N-2}\}z_i^{N-2} +
\cdots + (2A_2 + A'_1)z_i + A_1 + A'_0 = 0,$$
where $A'_j = dA_j/dz_i$.
Since $P(T)$ is the minimal polynomial of $z_i$, we have
$N + A'_{N-1} = 0$. Then we obtain $A_{N-1} = - Nz_i + c$ for some
constant $c \in {\mathbb C}$. Hence we have $z_i \in K_i$ for $A_{N-1} \in K_i$.
This is a contradiction.
\end{proof}

\begin{proposition}
Let $K_1$ and $K_2$ be subfields of ${\mathfrak M}({\mathbb C}^n)$
which are finitely generated over ${\mathbb C}$.
We assume that $K_1$ is non-degenerate, $K_1 \subset K_2$ and
${\rm Trans}_{{\mathbb C}}K_1 = {\rm Trans}_{{\mathbb C}}K_2$.
Then the identity mapping $id_{{\mathbb C}^n} : {\mathbb C}^n
\longrightarrow {\mathbb C}^n$ gives an isogeny
$\varphi : {\mathbb C}^n/\Gamma_{K_2} \longrightarrow 
{\mathbb C}^n/\Gamma_{K_1}$.
\end{proposition}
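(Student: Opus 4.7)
The plan is to verify that the identity of $\mathbb{C}^n$ carries $\Gamma_{K_2}$ into $\Gamma_{K_1}$ and that the induced quotient map has finite kernel. The inclusion $K_1\subset K_2$ immediately gives $\Gamma_{K_2}\subset\Gamma_f$ for every $f\in K_1$, hence $\Gamma_{K_2}\subset\Gamma_{K_1}$; since $K_1$ is non-degenerate, $\Gamma_{K_1}$ is discrete, so $\Gamma_{K_2}$ is discrete as well. Thus $\mathbb{C}^n/\Gamma_{K_2}$ and $\mathbb{C}^n/\Gamma_{K_1}$ are connected commutative complex Lie groups, and the identity descends to a surjective holomorphic homomorphism $\varphi:\mathbb{C}^n/\Gamma_{K_2}\to\mathbb{C}^n/\Gamma_{K_1}$ with kernel $\Gamma_{K_1}/\Gamma_{K_2}$. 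Everything thus reduces to showing this quotient is finite.

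Because $K_2$ is finitely generated over $\mathbb{C}$, if $K_2=\mathbb{C}(h_1,\dots,h_m)$ then also $K_2=K_1(h_1,\dots,h_m)$, so $K_2$ is finitely generated over $K_1$; together with $\mathrm{Trans}_{\mathbb{C}}K_1=\mathrm{Trans}_{\mathbb{C}}K_2$ this forces $K_2/K_1$ to be a finite algebraic extension. Write $K_2=K_1(g_1,\dots,g_s)$ and let $P_i(T)\in K_1[T]$ be the minimal polynomial of $g_i$ over $K_1$. For any $\gamma\in\Gamma_{K_1}$ the translation $T_\gamma h(z):=h(z+\gamma)$ acts as the identity on every coefficient of every $P_i$, so applying $T_\gamma$ to the relation $P_i(g_i)=0$ yields $P_i(T_\gamma g_i)=0$. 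Thus $T_\gamma g_i$ is a root of $P_i$ in the field $\mathfrak{M}(\mathbb{C}^n)$, and since a polynomial has at most $\deg P_i$ roots in a field, the map $\gamma\mapsto(T_\gamma g_1,\dots,T_\gamma g_s)$ takes $\Gamma_{K_1}$ into a finite set.

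The kernel $\Gamma_0:=\{\gamma\in\Gamma_{K_1}:T_\gamma g_i=g_i\text{ for all }i\}$ of this map is therefore a subgroup of finite index in $\Gamma_{K_1}$. Elements of $\Gamma_0$ fix $K_1$ pointwise and fix each generator $g_i$, hence fix the entire compositum $K_2=K_1(g_1,\dots,g_s)$; that is, $\Gamma_0\subset\Gamma_{K_2}$. Consequently $[\Gamma_{K_1}:\Gamma_{K_2}]\le[\Gamma_{K_1}:\Gamma_0]<\infty$, and $\varphi$ is an isogeny. There is no serious obstacle in this argument: the only nontrivial input is the translation-invariance of algebraic relations whose coefficients lie in $K_1$, which is immediate from the definition of $\Gamma_{K_1}$, together with the standard fact that a polynomial over a field has only finitely many roots in that field.
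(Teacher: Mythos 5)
Your proof is correct, but it takes a genuinely different route from the paper's. The paper also reduces to showing $\Gamma_{K_1}/\Gamma_{K_2}$ is finite, but it does so analytically: it picks a primitive element $g$ with $K_2={\mathbb C}(g,f_1,\dots,f_t)$, fixes $\gamma\in\Gamma_{K_1}$, evaluates the irreducible relation $P(g,f_1,\dots,f_t)=0$ at individual points $z$ outside the polar sets, applies the pigeonhole principle to the finitely many roots of $P(T,f_1(z),\dots,f_t(z))$ to find $k\neq\ell$ with $g(z+k\gamma)=g(z+\ell\gamma)$, and then needs Baire's category theorem plus the uniqueness theorem to upgrade this pointwise coincidence to a global identity, concluding only that some positive multiple of each $\gamma$ lies in $\Gamma_{K_2}$ (finiteness then uses that $\Gamma_{K_1}$ is a finitely generated discrete group). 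You instead work entirely inside the field $\mathfrak{M}({\mathbb C}^n)$: each $T_\gamma$ with $\gamma\in\Gamma_{K_1}$ is a field automorphism fixing $K_1$ pointwise, hence permutes the finitely many roots of the minimal polynomials of the generators of the finite extension $K_2/K_1$, so the stabilizer $\Gamma_0$ has finite index in $\Gamma_{K_1}$ and is contained in $\Gamma_{K_2}$. Your version avoids Baire category, the uniqueness theorem, the choice of the dense open set $\Omega$, and any appeal to finite generation of $\Gamma_{K_1}$, and it yields the explicit bound $[\Gamma_{K_1}:\Gamma_{K_2}]\leq\prod_i\deg P_i$; the paper's version is more elementary in its algebraic prerequisites but heavier analytically. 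Both arguments are sound.
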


\begin{proof}
We have $\Gamma_{K_2} \subset \Gamma_{K_1}$ by $K_1 \subset K_2$.
Since $K_1$ is non-degenerate, $\Gamma_{K_1}$ is a discrete subgroup.
Then $\Gamma_{K_2}$ is also discrete, and $K_2$ is non-degenerate.
We set $X_1 := {\mathbb C}^n/\Gamma_{K_1}$ and $X_2 := 
{\mathbb C}^n/\Gamma_{K_2}$. A homomorphism $\varphi : X_2 \longrightarrow
X_1$ is defined from  $id_{{\mathbb C}^n}$ for $\Gamma_{K_2} \subset \Gamma_{K_1}$.\par
It suffices to show the following statement $(*)$ which means that
$\Gamma_{K_1}/\Gamma_{K_2}$ is a finite group.\par
\medskip
\noindent
$(*)$ For any $\gamma \in \Gamma_{K_1}$ there exists $\alpha \in {\mathbb N}$
with $\alpha \gamma \in \Gamma_{K_2}$.\par
\medskip
\noindent
To see this, let $t := {\rm Trans}_{{\mathbb C}}K_1 =
{\rm Trans}_{{\mathbb C}}K_2$. We take algebraically independent
functions $f_1, \dots ,f_t \in K_1$.
Since $K_2/{\mathbb C}(f_1, \dots ,f_t)$ is an algebraic extension,
there exists $g \in K_2$ such that
$K_2 = {\mathbb C}(g,f_1, \dots , f_t)$. There exists an irreducible 
polynomial $P$ over ${\mathbb C}$ such that
\begin{equation}
P(g, f_1, \dots ,f_t) = 0.
\end{equation}
Let $P_g$ and $P_{f_i}$ be the polar sets of $g$ and $f_i$ respectively.
We set
$$\Omega := {\mathbb C}^n \setminus \left(
\bigcup_{i=1}^t P_{f_i} \cup P_g \right).$$
Then $\Omega $ is an open dense subset of ${\mathbb C}^n$.
Take any $\gamma \in \Gamma_{K_1}$. It holds for all $k \in {\mathbb N}$
that
\begin{equation}
f_i(z + k\gamma) = f_i(z)\quad \text{for all $z \in {\mathbb C}^n$
and $i=1, \dots ,t$.}
\end{equation}
We fix any $z \in \Omega $. From (5.2) and (5.3) it follows that
\begin{equation}
P(g(z + k\gamma), f_1(z), \dots ,f_t(z)) = 0
\end{equation}
for all $k \in {\mathbb N}$. Since the equation
$P(T,f_1(z), \dots , f_t(z)) = 0$ has at most a finite number of solutions,
there exists a pair $(k,\ell)$ of natural numbers with $k \not= \ell$ such that
$g(z + k\gamma) = g(z + \ell \gamma)$. We note that the pair
$(k,\ell)$ depends on $z$. For any $(k,\ell) \in {\mathbb N}^2$ with $k \not= \ell$,
we set
$$A_{k,\ell} := \{ z \in \Omega ; g(z + k\gamma) = g(z + \ell \gamma)\} .$$
If the interior $A_{k,\ell}^{\circ}$ of $A_{k,\ell}$ is not empty for some
$(k,\ell)$ with $k \not= \ell$, then $\Omega = A_{k,\ell}^{\circ}$ by the
uniqueness theorem. In this case, $|k - \ell |\gamma$ is a period of $g$.
Then the statement $(*)$ holds.\par
Assume that $A_{k,\ell}^{\circ} = \emptyset$ for all $(k,\ell) \in {\mathbb N}^2$
with $k \not= \ell$. Then the set
$$A := \bigcup_{\substack{(k,\ell) \in {\mathbb N}^2\\ k \not= \ell}}
A_{k,\ell}$$
is nowhere dense in $\Omega $. However, we have $A = \Omega$ as shown above.
This contradicts Baire's category theorem. Hence we conclude the statement $(*)$.
\end{proof}

\begin{lemma}
Let $K$ be a subfield of
${\mathbb C}(e^{z_1}, \dots , e^{z_n})$ satisfying the condition {\rm (T)}.
If $K$ is $D$-closed,
then there exists a ${\mathbb C}$-linear isomorphism
$\Phi : {\mathbb C}^n \longrightarrow {\mathbb C}^n$ such that
$\Phi^{*}K = {\mathbb C}(e^{z_1}, \dots , e^{z_n})$.
\end{lemma}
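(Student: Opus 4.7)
The plan is to use Propositions 1 and 3 to reduce to the case where the period group of $K$ equals $\Gamma_L=(2\pi i{\mathbb Z})^n$, and then to use $D$-closedness together with a minimal-polynomial analysis of each $e^{z_j}$ to conclude that the transformed field coincides with $L$.

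First I would show that $K$ is non-degenerate: if $\Gamma_K$ were not discrete, its identity component would contain a non-zero direction $v$ satisfying $\sum_j v_j\partial_{z_j}f=0$ for every $f\in K$, contradicting the non-singularity of the Jacobian of any transcendence basis of $K$. Applying Proposition 3 to $K\subset L$ then yields an isogeny $({\mathbb C}^*)^n={\mathbb C}^n/\Gamma_L\longrightarrow{\mathbb C}^n/\Gamma_K$ induced by the identity, and Proposition 1 forces ${\mathbb C}^n/\Gamma_K$ to share the Remmert--Morimoto type $(p,q,r)=(0,n,0)$ of $({\mathbb C}^*)^n$. Hence $\Gamma_K$ admits a ${\mathbb Z}$-basis $\gamma_1,\dots,\gamma_n$ that is simultaneously a ${\mathbb C}$-basis of ${\mathbb C}^n$. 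Defining $\Phi\in GL(n,{\mathbb C})$ by $\Phi(2\pi i\,e_j):=\gamma_j$, the field $K':=\Phi^*K$ is $D$-closed and its period group equals $\Phi^{-1}(\Gamma_K)=\Gamma_L$.

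The second step is to check $K'\subset L$. With $d:=[\Gamma_K:\Gamma_L]$, the matrix of $\Phi$ has entries in $\frac{1}{d}{\mathbb Z}$, so for any $f\in K$ the pullback $f\circ\Phi$ is rational in the $d$-th roots $\tilde u_k:=e^{w_k/d}$. The invariance of $f\circ\Phi$ under the generators $2\pi i\,e_k$ of $\Gamma_L$ translates into invariance under the $({\mathbb Z}/d)^n$-action $\tilde u_k\mapsto e^{2\pi i/d}\tilde u_k$ (others fixed), whose fixed field is exactly ${\mathbb C}(\tilde u_1^d,\dots,\tilde u_n^d)=L$.

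The crucial remaining step---the main obstacle---is to prove $K'=L$. Setting $u_k:=e^{w_k}$, suppose some $u_j\notin K'$ and let $p(T)\in K'[T]$ of degree $e\ge 2$ be its minimal polynomial. Applying $\delta_k:=u_k\partial_{u_k}$ for $k\ne j$ to $p(u_j)=0$, the minimality of $p$ forces every coefficient of $p$ to be $\delta_k$-invariant, hence to lie in ${\mathbb C}(u_j)$; applying $\delta_j$ then shows each coefficient $b_i$ of $T^i$ is a $\delta_j$-eigenvector with eigenvalue $e-i$, so $b_i=c_i u_j^{e-i}$ for some $c_i\in{\mathbb C}$. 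Substituting $T=Su_j$ exhibits $p$ as $u_j^e q(S)$ for a monic $q\in{\mathbb C}[S]$ of degree $e$; since the splitting field $K'(u_j)/K'$ is Galois and its Galois group acts by multiplying $u_j$ by the $e$ roots of $q$, these roots form a finite subgroup of ${\mathbb C}^*$, so $q(S)=S^e-1$ and $p(T)=T^e-u_j^e$. Doing this for each $j$ makes $L/K'$ a Galois extension with Galois group $G$ acting by $u_k\mapsto\zeta_k u_k$ for roots of unity $\zeta_k$; such automorphisms are translations by elements of $T=({\mathbb C}^*)^n$, so $G\subset T$ is a finite subgroup and $K'=L^G$. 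The period group of $L^G$, lifted to ${\mathbb C}^n$, is precisely the preimage $\tilde G\supset\Gamma_L$ of $G$; the hypothesis $\Gamma_{K'}=\Gamma_L$ then forces $\tilde G=\Gamma_L$, so $G$ is trivial and $K'=L$.
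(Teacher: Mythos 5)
Your proof is correct. It opens the same way as the paper's --- Proposition 3 applied to $K\subset L:={\mathbb C}(e^{z_1},\dots ,e^{z_n})$ to make $\Gamma_K/\Gamma_L$ finite, followed by a minimal-polynomial-plus-differentiation analysis --- but the execution diverges in two ways worth recording. First, you normalize by sending $(2\pi\sqrt{-1}{\mathbb Z})^n$ onto an \emph{arbitrary} ${\mathbb Z}$-basis of $\Gamma_K$ (a ${\mathbb C}$-basis by Proposition 1), whereas the paper asserts $\Gamma_K=(2\pi\sqrt{-1}/\alpha_1){\mathbb Z}\times\cdots\times(2\pi\sqrt{-1}/\alpha_n){\mathbb Z}$; the latter is not true of every finite overlattice of $\Gamma_L$ (e.g.\ $K={\mathbb C}(e^{z_1+z_2},e^{z_1-z_2})$ is $D$-closed and its period lattice is not such a product), so your version is the one that covers all cases, and you also check the non-degeneracy of $K$ needed to invoke Proposition 3, which the paper leaves implicit. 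Second, the paper reduces to $n=1$ via $K_i:=K\cap{\mathbb C}(e^{\alpha_iz_i})$, extracts $e^{N\zeta}\in K$ from the differentiated minimal polynomial, and finishes with an analytic pigeonhole argument on translates by $2\pi\sqrt{-1}/N$ to produce a forbidden period; you keep all $n$ variables, use the derivations $\delta_k=u_k\partial_{u_k}$ to force the coefficients into the form $c_iu_j^{e-i}$, conclude $u_j^{e_j}\in K'$ with minimal polynomial $T^{e_j}-u_j^{e_j}$, and close by Galois theory: $K'=L^G$ for a finite group $G$ of torus translations whose preimage in ${\mathbb C}^n$ would enlarge $\Gamma_{K'}$ beyond $\Gamma_L$ unless $G$ is trivial. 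The Galois finish is cleaner and purely algebraic, at the cost of a little field theory; the paper's finish is more elementary but needs the uniqueness theorem and a subsequence extraction. One small economy: since irreducibility gives $c_0\neq 0$, the membership $u_j^{e}\in K'$ plus a degree count already yields $p(T)=T^{e}-u_j^{e}$, so the identification $q(S)=S^{e}-1$ via the Galois action, though correct, is not strictly needed.
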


\begin{proof}
We set $L := {\mathbb C}(e^{z_1}, \dots , e^{z_n})$.
Since $K \subset L$ and ${\rm Trans}_{{\mathbb C}}K = 
{\rm Trans}_{{\mathbb C}}L = n$, 
${\mathbb C}^n/\Gamma_K$ and ${\mathbb C}^n/\Gamma_L$
are isogenous by Proposition 3.
Then $\Gamma_K/\Gamma_L$ is a finite group.
Therefore, there exist $\alpha_1, \dots ,\alpha_n \in {\mathbb N}$
such that
$$\Gamma_K = (2\pi \sqrt{-1}/\alpha_1){\mathbb Z} \times
\cdots \times (2\pi \sqrt{-1}/\alpha_n){\mathbb Z} $$
for $\Gamma_L = (2\pi \sqrt{-1}{\mathbb Z})^n$.
Hence we obtain 
$K \subset {\mathbb C}(e^{\alpha_1z_1}, \dots , e^{\alpha_n z_n})$.
Once we show $K = {\mathbb C}(e^{\alpha_1z_1}, \dots , e^{\alpha_n z_n})$,
the lemma is obvious.\par
Then we show $e^{\alpha_1z_1}, \dots ,e^{\alpha_nz_n} \in K$.
For any $i = 1, \dots , n$ we set $K_i := K \cap {\mathbb C}(e^{\alpha_i z_i})$.
Then $K_i$ is closed by derivative, ${\rm Trans}_{{\mathbb C}}K_i = 1$
and $\Gamma_{K_i} = (2\pi \sqrt{-1}/\alpha_i){\mathbb Z}$. Therefore it is sufficient
to consider the case $n=1$.\par
Letting $\zeta = \alpha_i z_i$, we may consider a subfield
$K$ of ${\mathbb C}(e^{\zeta})$ which is closed by derivative,
${\rm Trans}_{{\mathbb C}}K = 1$ and $\Gamma_K = 2\pi \sqrt{-1}{\mathbb Z}$.
We show $e^{\zeta} \in K$. If $e^{\zeta} \notin K$, then $e^{\zeta}$ is algebraic
over $K$. Let $N \geqq 2$ be the degree of $e^{\zeta}$ over $K$. Take the
minimal polynomial
$$P(T) = T^N + A_{N-1}T^{N-1} + \cdots + A_1T + A_0 \in K[T]$$
of $e^{\zeta}$ over $K$. Then we have
\begin{equation}
(e^{\zeta})^N + A_{N-1}(e^{\zeta})^{N-1} + \cdots + A_1e^{\zeta} + A_0 = 0.
\end{equation}
Differentiating (5.5), we obtain
\begin{equation}
(e^{\zeta})^N + \sum _{k=1}^{N}\frac{1}{N}\left(A'_{N-k} + (N-k)A_{N-k}\right)
(e^{\zeta})^{N-k} = 0.
\end{equation}
Since the minimal polynomial is unique, we have
$$\frac{1}{N}\left( A'_{N-k} + (N-k)A_{N-k}\right) = A_{N-k}$$
for all $k = 1, \dots , N$. Then $A'_{N-k} = kA_{N-k}$. Therefore we obtain
$A_{N-k} = c_{N-k}e^{k\zeta}$ for some constant
$c_{N-k} \in {\mathbb C}$. Especially we have
$A_0 = c_0e^{N\zeta} \in K$. Since $P(T)$ is irreducible, we have $c_0 \not= 0$.
Therefore $e^{N\zeta} \in K$, hence we have
${\mathbb C}(e^{\zeta}) \supset K \supset {\mathbb C}(e^{N\zeta})$.\par
Let $M$ be the degree of the extension $K/{\mathbb C}(e^{N\zeta})$.
Then $M < N$ for $K \varsubsetneqq {\mathbb C}(e^{\zeta})$.
The period group of ${\mathbb C}(e^{N\zeta})$ is $(2\pi \sqrt{-1}/N){\mathbb Z}$.
There exists $f \in K$ such that $K = {\mathbb C}(f, e^{N\zeta})$. Let
$$Q(T) = T^M + B_{M-1}T^{M-1} + \cdots + B_1T + B_0$$
be the minimal polynomial of $f$ over ${\mathbb C}(e^{N\zeta})$, where
$B_j \in {\mathbb C}(e^{N\zeta})$ for $j = 0, 1, \dots , M-1$.
We set $\gamma_0 := 2\pi \sqrt{-1}/N$. Then $B_j(\zeta + \gamma_0) = B_j(\zeta)$
for all $\zeta \in {\mathbb C}$. Take a point $\zeta_0 \in {\mathbb C}$
at which all of $B_j(\zeta)$ are holomorphic. We take a sequence
$\{ \zeta^{(\mu)}\}$ such that all of $B_j(\zeta)$ are holomorphic at
$\zeta^{(\mu)}$ and $\zeta^{(\mu)} \rightarrow \zeta_0$ $(\mu \to \infty)$.
Fix $\zeta^{(\mu)}$. Then we have
$$f(\zeta^{(\mu)} + \alpha \gamma_0)^M + B_{M-1}(\zeta^{(\mu)})
f(\zeta^{(\mu)} + \alpha \gamma_0)^{M-1} + \cdots +
B_0(\zeta^{(\mu)}) = 0$$
for any $\alpha = 1, \dots , N$. Since $M < N$, there exists a pair
$(k,\ell)$ with $1 \leqq k < \ell \leqq N$ such that
$f(\zeta^{(\mu)} + k \gamma_0) = f(\zeta^{(\mu)} + \ell \gamma_0)$.
We note that the pair $(k,\ell)$ depends on $\zeta^{(\mu)}$. However,
we can take a subsequence $\{ \zeta^{(\mu')}\}$ of $\{ \zeta^{(\mu)}\}$
and a pair $(k,\ell)$ with $1 \leqq k < \ell \leqq N$ such that
$$f(\zeta^{(\mu')} + k\gamma_0) = f(\zeta^{(\mu')} + \ell \gamma_0)$$
for all $\mu'$. Since $\zeta^{(\mu')} \rightarrow \zeta_0$, we obtain
$$f(\zeta + k\gamma_0) = f(\zeta + \ell \gamma_0)$$
for all $\zeta \in {\mathbb C}$ by the uniqueness theorem. Then
$(\ell - k)\gamma_0$ is a period of $f$. Since $0 < \ell - k < N$,
this contradicts our assumption
$\Gamma_K = 2\pi \sqrt{-1}{\mathbb Z}$.
\end{proof}

\begin{proposition}
Let $K$ be a non-degenerate subfield of ${\mathfrak M}({\mathbb C}^n)$
satisfying the condition {\rm (T)}. 
We assume that $K$ is a $D$-closed subfield of a {\rm W}-type subfield
$\sigma^{*}({\mathfrak M}(\overline{X})|_X)$, where $X = {\mathbb C}^p \times
({\mathbb C}^{*})^q \times {\mathcal Q}$ with an $r$-dimensional quasi-abelian
variety of kind 0 and $\sigma : {\mathbb C}^n \longrightarrow X$ is the projection.
Let $z = (z',z'') = (z_1, \dots ,z_p, z_{p+1}, \dots ,z_{p+q}; z_{p+q+1}, \dots , z_n)$
be coordinates of ${\mathbb C}^n$ such that ${\mathfrak M}(\overline{X}) =
{\mathbb C}(z_1, \dots , z_p, e^{z_{p+1}}, \dots ,e^{z_{p+q}}, g_0(z''), g_1(z''),
\dots , g_r(z''))$, where $g_0(z''), g_1(z''), \dots ,$   $g_r(z'')$ are generators of
${\mathfrak M}(\overline{{\mathcal Q}})$. Then there exists a ${\mathbb C}$-linear
isomorphism $\Phi : {\mathbb C}^n \longrightarrow {\mathbb C}^n$ such that
$$\Phi ^{*}K = {\mathbb C}(z_1, \dots , z_p, e^{z_{p+1}}, \dots , e^{z_{p+q}})\cdot 
K_0,$$
where $K_0$ is a $D$-closed subfield of ${\mathfrak M}(\overline{{\mathcal Q}})|_{{\mathcal Q}}$
with ${\rm Trans}_{{\mathbb C}}K_0 = {\rm Trans}_{{\mathbb C}}
{\mathfrak M}(\overline{{\mathcal Q}})|_{{\mathcal Q}}$.
\end{proposition}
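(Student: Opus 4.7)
My plan is (i) to use the isogeny results of Section 4 to align $K$ with the product structure of $L$; (ii) to apply Lemmas 4 and 5 to extract the $L_1$ factor; and (iii) to identify $K_0$ and establish the product decomposition.

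First, I would apply Proposition 3 to $K \subset L := \sigma^{*}({\mathfrak M}(\overline{X})|_X)$ (both of transcendence degree $n$), obtaining an isogeny $X = {\mathbb C}^n/\Gamma_L \longrightarrow {\mathbb C}^n/\Gamma_K$. By the Remmert-Morimoto theorem combined with Proposition 2 and Remark 1, the target decomposes as ${\mathbb C}^n/\Gamma_K \cong {\mathbb C}^p \times ({\mathbb C}^{*})^q \times {\mathcal Q}'$ with ${\mathcal Q}'$ a quasi-abelian variety of kind 0 isogenous to ${\mathcal Q}$, and the isogeny respects both product structures. Choose a ${\mathbb C}$-linear isomorphism $\Phi_0$ of ${\mathbb C}^n$ aligning these two product structures. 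Replacing $K$ by $\Phi_0^{*}K$ (still $D$-closed, as partial derivatives transform linearly under linear coordinate changes), the period group $\Gamma_{\Phi_0^{*}K}$ has block form $\{0\}^p \times \Lambda_2 \times \Lambda_3$ with $\Lambda_2$ and $\Lambda_3$ commensurable with the standard lattices of the $({\mathbb C}^{*})^q$ and ${\mathcal Q}$ factors.

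Second, writing $L = L_1 \cdot L_2$ with $L_1 := {\mathbb C}(z_1,\dots,z_p,e^{z_{p+1}},\dots,e^{z_{p+q}})$ and $L_2 := {\mathfrak M}(\overline{{\mathcal Q}})|_{{\mathcal Q}}$, I set $K_1 := \Phi_0^{*}K \cap L_1$ and $K_0 := \Phi_0^{*}K \cap L_2$; each is $D$-closed in the appropriate coordinate block. A derivation-count argument --- the $r$ derivations $\partial/\partial z_i$ $(i > p+q)$ act on $\Phi_0^{*}K$ with common kernel $K_1$, forcing ${\rm Trans}_{K_1}\Phi_0^{*}K = r$, and symmetrically ${\rm Trans}_{K_0}\Phi_0^{*}K = p+q$ --- yields ${\rm Trans}_{{\mathbb C}}K_1 = p+q$ and ${\rm Trans}_{{\mathbb C}}K_0 = r$. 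Applying Lemma 4 to the intersection with ${\mathbb C}(z_1,\dots,z_p)$ and Lemma 5 to the intersection with ${\mathbb C}(e^{z_{p+1}},\dots,e^{z_{p+q}})$, together with a further ${\mathbb C}$-linear adjustment $\Phi_1$ acting only on the first $p+q$ coordinates, gives $\Phi_1^{*}K_1 = L_1$. Setting $\Phi := \Phi_1 \circ \Phi_0$, we get $L_1 \subset \Phi^{*}K$ and $K_0 \subset \Phi^{*}K$, hence $L_1 \cdot K_0 \subset \Phi^{*}K$.

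The main obstacle is the reverse containment $\Phi^{*}K \subset L_1 \cdot K_0$. Since both have transcendence degree $n$, the extension $\Phi^{*}K/L_1 \cdot K_0$ is algebraic; the task is to show it is trivial. My approach is to take $f \in \Phi^{*}K$, let $P(T) \in (L_1 \cdot K_0)[T]$ be its minimal polynomial of degree $N$, assume $N \ge 2$ for contradiction, and differentiate $P(f) = 0$ with respect to each $\partial/\partial z_i$ for $i \le p+q$. These derivations annihilate $K_0$ and act on $L_1$. If the polynomial $P^{(i)}$ obtained by differentiating the coefficients of $P$ vanishes for every such $i$, then $\partial f/\partial z_i = 0$ for $i \le p+q$, so $f \in \Phi^{*}K \cap L_2 = K_0$, contradicting $f \notin L_1 \cdot K_0$. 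Otherwise the identity $P^{(i)}(f) + P'(f)\,\partial f/\partial z_i = 0$, combined with the uniqueness of the minimal polynomial, produces a lower-degree algebraic relation for $f$, in the spirit of the degree-reduction arguments at the end of Lemmas 4 and 5; iterating forces $f \in L_1 \cdot K_0$. Carrying out this degree-reduction cleanly in the multivariable setting, with $L_1 \cdot K_0$ playing the role of the coefficient field, is the technical heart of the proof.
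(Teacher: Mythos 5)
Your overall route is the same as the paper's: show ${\mathbb C}(z_1,\dots ,z_p)\subset K$ via Lemma 4, straighten the exponential block via Lemma 5, set $K_0=K\cap {\mathfrak M}(\overline{{\mathcal Q}})|_{{\mathcal Q}}$, and compose the linear isomorphisms. (Your preliminary isogeny $\Phi _0$ from Proposition 3 is harmless but unnecessary: the hypothesis already supplies coordinates adapted to the product structure of $X$, and the paper works with those directly.) The genuine gap is exactly where you place it, namely the inclusion $\Phi ^{*}K\subset L_1\cdot K_0$, and the degree-reduction you sketch would not close it. In Lemmas 4 and 5 the element being forced into $K$ ($z_i$, resp. $e^{\zeta }$) satisfies an explicit first-order differential equation, so differentiating its minimal polynomial yields a second monic relation of the same degree, and uniqueness of the minimal polynomial converts into solvable ODEs for the coefficients ($A_{N-1}=-Nz_i+c$, $A_{N-k}=c_{N-k}e^{k\zeta }$). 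In your situation the roles are reversed: $f$ is the unknown algebraic element over the large field $L_1\cdot K_0$, and the identity $P^{(i)}(f)+P'(f)\,\partial f/\partial z_i=0$ merely expresses $\partial f/\partial z_i=-P^{(i)}(f)/P'(f)\in (L_1\cdot K_0)(f)$; it produces no relation of degree less than $N$ and no contradiction with $N\geqq 2$. A second, smaller gap: your derivation count only yields ${\rm Trans}_{K_1}\Phi ^{*}K\geqq r$, hence ${\rm Trans}_{{\mathbb C}}K_1\leqq p+q$ (and likewise ${\rm Trans}_{{\mathbb C}}K_0\leqq r$); the opposite inequalities, which are needed before Lemmas 4 and 5 can even be invoked (they require condition (T) for the intersections), are again part of what has to be proved.

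What is actually required --- and what the paper itself compresses into the one-line assertions ``$K={\mathbb C}(z_1,\dots ,z_p)\cdot K_1$'' and the final display --- is a coefficient-extraction argument exploiting the linear disjointness of the factors of $L$ together with $D$-closedness: write $f\in K$ as a rational expression in $z_1,\dots ,z_p$ (resp. a finite Laurent expression in $e^{z_{p+1}},\dots ,e^{z_{p+q}}$) with coefficients in the complementary factor, and use the derivations $\partial /\partial z_i$ for $i\leqq p+q$, which preserve $K$, annihilate the complementary factor, and act invertibly (Vandermonde/Wronskian type) on the monomials in $z'$ and $e^{z_{p+1}},\dots ,e^{z_{p+q}}$, to conclude that each coefficient already lies in $K$ and hence in $K_0$. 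Without some version of this step your argument, like the sketch you give, stops short of the stated equality.
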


\begin{proof}
Let $K(X) := \sigma^{*}({\mathfrak M}(\overline{X})|_X)$. Then $K \subset K(X)$.
By Lemma 4 we have $K\cap {\mathbb C}(z_1, \dots , z_p) = {\mathbb C}(z_1, \dots , 
z_p)$. We set $K_1 := K \cap {\mathbb C}(e^{z_{p+1}}, \dots , e^{z_{p+q}},
g_0(z''), g_1(z''),$  $ \dots , g_r(z''))$. Then we have $K = {\mathbb C}(z_1, \dots , z_p)\cdot
K_1$. We note that $K_1$ is non-degenerate, $D$-closed and ${\rm Trans}_{{\mathbb C}}
K_1 = q + r$. \par
Since $K_1 \cap {\mathbb C}(e^{z_{p+1}}, \dots ,e^{z_{p+q}})$ is $D$-closed and
${\rm Trans}_{{\mathbb C}}(K_1 \cap {\mathbb C}(e^{z_{p+1}}, \dots ,e^{z_{p+q}})) = q$,
there exists a ${\mathbb C}$-linear isomorphism
$\Phi : {\mathbb C}^q \longrightarrow {\mathbb C}^q$ such that
$$\Phi^{*}(K_1 \cap {\mathbb C}(e^{z_{p+1}}, \dots ,e^{z_{p+q}})) =
 {\mathbb C}(e^{z_{p+1}}, \dots ,e^{z_{p+q}})) $$
by Lemma 5. If we set a ${\mathbb C}$-linear isomorphism
${\widetilde \Phi} := (id_{{\mathbb C}^p}, \Phi, id_{{\mathbb C}^r}) :
{\mathbb C}^p \times {\mathbb C}^q \times {\mathbb C}^r \longrightarrow
{\mathbb C}^p \times {\mathbb C}^q \times {\mathbb C}^r$,
then we obtain
$${\widetilde \Phi}^{*}K = {\mathbb C}(z_1, \dots , z_p, e^{z_{p+1}}, \dots ,
e^{z_{p+q}})\cdot K_0,$$
where $K_0 = K \cap {\mathbb C}(g_0(z''), g_1(z''), \dots , g_r(z''))$.
\end{proof}

Let $f_1, \dots , f_n \in {\mathfrak M}({\mathbb C}^n)$ be algebraically 
independent meromorphic functions which are solutions of a system of Briot-Bouquet type
partial differential equations (5.1). We define
$$
K := {\mathbb C}\left( f_1, \dots , f_n, \left\{ \frac{
\partial f_i}{\partial z_j}; i,j = 1, \dots , n \right\} \right).
$$
Then $K$ obviously satisfies the condition (T).
In this case, we say that $K$ is determined by solutions
$f_1, \dots , f_n$ of a system of Briot-Bouquet type
partial differential equations.
We see at once the
following lemma by (5.1).

\begin{lemma}
Let $K$ be as above. Then
$K$ is $D$-closed.
\end{lemma}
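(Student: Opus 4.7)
The plan is to reduce $D$-closedness of $K$ to placing every second-order mixed partial $\partial^{2} f_i/(\partial z_k \partial z_j)$ in $K$. Since $\partial/\partial z_k$ is a derivation and $K$ is the rational function field generated over ${\mathbb C}$ by $f_1,\ldots,f_n$ and $\{\partial f_i/\partial z_j\}$, it suffices to show that the $z_k$-derivative of each generator lies in $K$. For the generators $f_i$ this is immediate, so only the second partials $\partial/\partial z_k(\partial f_i/\partial z_j)$ require work.

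Fix $i,j,k$. If $\partial f_i/\partial z_j$ is constant there is nothing to prove. Otherwise I apply $\partial/\partial z_k$ to the Briot--Bouquet relation $P_{ij}(\partial f_i/\partial z_j,f_1,\ldots,f_n)=0$ of (5.1). Writing the arguments of $P_{ij}$ as $(T,X_1,\ldots,X_n)$, the chain rule yields
$$\frac{\partial P_{ij}}{\partial T}\!\left(\frac{\partial f_i}{\partial z_j},f_1,\ldots,f_n\right)\cdot\frac{\partial^{2} f_i}{\partial z_k\partial z_j}\;=\;-\sum_{\ell=1}^{n}\frac{\partial P_{ij}}{\partial X_\ell}\!\left(\frac{\partial f_i}{\partial z_j},f_1,\ldots,f_n\right)\cdot\frac{\partial f_\ell}{\partial z_k}.$$
The right-hand side is a polynomial expression in elements already in $K$. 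Provided the coefficient on the left is not identically zero as a meromorphic function, we may solve algebraically and conclude $\partial^{2} f_i/(\partial z_k\partial z_j)\in K$, which is exactly what we need.

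The one non-trivial point is the nonvanishing of that coefficient. Since $f_1,\ldots,f_n$ are algebraically independent over ${\mathbb C}$, the irreducible polynomial $P_{ij}$ must have positive degree in $T$ (otherwise (5.1) would be a nontrivial algebraic relation among the $f_\ell$). Irreducibility in ${\mathbb C}[T,X_1,\ldots,X_n]$ together with Gauss's lemma forces $P_{ij}$ to be irreducible in ${\mathbb C}(X_1,\ldots,X_n)[T]$, so up to a scalar it is the minimal polynomial of $\partial f_i/\partial z_j$ over ${\mathbb C}(f_1,\ldots,f_n)$. In characteristic zero $\partial P_{ij}/\partial T$ has strictly smaller $T$-degree, so if its evaluation at $(\partial f_i/\partial z_j,f_1,\ldots,f_n)$ vanished identically, $\partial f_i/\partial z_j$ would satisfy a polynomial relation of lower degree over ${\mathbb C}(f_1,\ldots,f_n)$, contradicting minimality. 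This is the main (and essentially only) obstacle; once it is cleared, the displayed identity solves for the second partial as a rational expression in generators of $K$, and the lemma follows.
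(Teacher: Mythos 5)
Your proof is correct and follows essentially the same route as the paper: reduce $D$-closedness to showing the second partials $\partial^{2}f_i/(\partial z_j\partial z_k)$ lie in $K$, differentiate the Briot--Bouquet relation (5.1) with respect to $z_k$, and solve for the second partial as a rational expression in the generators. Your extra paragraph justifying that $P_S$ evaluated at $(\partial f_i/\partial z_j, f_1,\dots,f_n)$ does not vanish identically is a detail the paper leaves implicit, and it is argued correctly via minimality in characteristic zero.
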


\begin{proof}
It suffices to show
$$\frac{\partial ^2f_i}{\partial z_j \partial z_k} \in K$$
for any $i,j,k = 1, \dots , n$. If $\partial f_i/\partial z_j$
is not constant, then there exists an irreducible polynomial
$P(S,T_1, \dots , T_n)$ such that
$$P\left( \frac{\partial f_i}{\partial z_j}, f_1, \dots , f_n
\right) = 0.$$
Differentiating the above equation by $z_k$, we obtain
$$
\frac{\partial ^2 f_i}{\partial z_j \partial z_k} = -
\frac{\sum _{\ell = 1}^{n} P_{T_{\ell }}\left( 
\frac{\partial f_i}{\partial z_j}, f_1, \dots , f_n \right)
\frac{\partial f_{\ell }}{\partial z_k}}
{P_S \left( \frac{\partial f_i}{\partial z_j}, f_1, \dots , f_n
\right)}
\in K.$$
\end{proof}

\section{Algebraic extension}
First we prove the following lemma.

\begin{lemma}
Let $K_0$ be a $D$-closed subfield of ${\mathfrak M}({\mathbb C}^n)$.
If $K/K_0$ is an algebraic extension, then $K$ is also a $D$-closed
subfield.
\end{lemma}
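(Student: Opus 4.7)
The plan is to take an arbitrary $f \in K$ and write down its minimal polynomial over $K_0$; then implicit differentiation together with the $D$-closedness of $K_0$ will express each $\partial f/\partial z_i$ as a rational expression in $f$ and in elements of $K_0$, hence an element of $K$.

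In detail, I would fix $f \in K$ and $i \in \{1,\dots,n\}$. Since $K/K_0$ is algebraic and we are in characteristic zero, $f$ is separable over $K_0$; let
\[
P(T)=T^N+a_{N-1}T^{N-1}+\cdots+a_1T+a_0\in K_0[T]
\]
be its minimal polynomial, so that $P(f)=0$ as a meromorphic identity on $\mathbb{C}^n$. Differentiating with respect to $z_i$ gives
\[
P'(f)\,\frac{\partial f}{\partial z_i}+\sum_{j=0}^{N-1}\frac{\partial a_j}{\partial z_i}\,f^j=0,
\]
where $P'(T)=NT^{N-1}+(N-1)a_{N-1}T^{N-2}+\cdots+a_1$ is the formal $T$-derivative of $P$.

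The decisive observation is that $P'(f)\not\equiv 0$: since $P$ is irreducible in characteristic zero, $\gcd(P,P')=1$ in $K_0[T]$, so $P'$ does not vanish at any root of $P$. Hence I may divide and obtain
\[
\frac{\partial f}{\partial z_i}=-\frac{\displaystyle\sum_{j=0}^{N-1}\dfrac{\partial a_j}{\partial z_i}\,f^j}{P'(f)}.
\]
Each $\partial a_j/\partial z_i$ lies in $K_0\subset K$ by the $D$-closedness of $K_0$, while $f$ and $P'(f)$ lie in $K$. Therefore $\partial f/\partial z_i\in K$, which is exactly the required closure.

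I do not anticipate any serious obstacle; the one delicate point is the use of separability (via characteristic zero) to guarantee $P'(f)\neq 0$, which is what legitimizes the division. The remaining part of Definition 4, condition (T), is inherited by $K$ from $K_0$ because an algebraic extension preserves transcendence degree, so the object produced is genuinely a $D$-closed subfield in the stated sense.
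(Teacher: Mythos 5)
Your proof is correct and follows essentially the same route as the paper: differentiate the minimal polynomial over $K_0$ and solve for $\partial f/\partial z_i$, using $D$-closedness of $K_0$ for the coefficient derivatives. The only (harmless) differences are that you apply the argument directly to an arbitrary $f\in K$ instead of first writing $K=K_0(f)$ for a primitive element, and that you explicitly justify $P'(f)\neq 0$ via separability, a point the paper leaves implicit.
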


\begin{proof}
There exists $f \in K$ such that $K = K_0(f)$. Take the minimal polynomial
$P(T) = T^N + \sum _{j=1}^{N}A_{N-j}T^{N-j} \in K_0[T]$ of $f$ over $K_0$.
It suffices to show $\partial f/\partial z_k \in K$ for $k = 1, \dots , n$.
We assume that $\partial f/\partial z_k$ is not constant. Differentiating
$P(f) = 0$ by $z_k$, we obtain
$$\left( Nf^{N-1} + \sum _{j=1}^{N-1}(N -j) A_{N-j}f^{N-j-1}\right)
\frac{\partial f}{\partial z_k} + \sum _{j=1}^{N} \frac{\partial A_{N-j}}
{\partial z_k} f^{N-j} = 0.$$
Therefore we have
$$\frac{\partial f}{\partial z_k} = - 
\frac{\sum_{j=1}^{N}\frac{\partial A_{N-j}}{\partial z_k} f^{N-j}}
{Nf^{N-1} + \sum_{j=1}^{N-1}(N - j) A_{N-j} f^{N - j-1}}.$$
Since $K_0$ is $D$-closed, $\partial f/\partial z_k$ belongs to $K$.
\end{proof}

We recall a classical fact on meromorphic functions to show the next
proposition. The following lemma is an immediate consequence from
the continuation theorem of Levi for meromorphic functions (cf. \cite{levi},
\cite{kneser} and \cite{okuda-sakai}).

\begin{lemma}
Let $f(z,w)$ be a meromorphic function on
$\{ z \in {\mathbb C}^N ; \| z \| < R\} \times \{ w \in {\mathbb C}
; 0 < |w| < r\}$. We assume that $(0,0)$ is an essential singularity of $f$.
Then, for any $z^0 \in {\mathbb C}^N$ close to $0$, $f$ has an 
essential singularity in
$\{ z^0\} \times \{ w \in {\mathbb C} ; |w| < r\}$.
\end{lemma}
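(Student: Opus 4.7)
My plan is to proceed by contradiction and invoke the Levi continuation theorem for meromorphic functions, in the form developed in the cited references. Suppose there exists $z^0 \in \mathbb{C}^N$ close to $0$ such that $f$ has no essential singularity at any point of the slice $\{z^0\} \times \{|w|<r\}$. The only boundary point of the punctured slice within that disc is $(z^0,0)$, so the restriction $w \mapsto f(z^0, w)$, already meromorphic on $\{0<|w|<r\}$, has at worst a pole or removable singularity at $w=0$, and therefore extends to a meromorphic function on the full disc $\{|w|<r\}$.

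The next step is to upgrade this single-slice meromorphic extension to a genuine extension of $f$ across the hypersurface $\{w=0\}$ near $(0,0)$. This is precisely what Levi's continuation theorem (cf.\ Levi, Kneser, and Okuda--Sakai) provides: a meromorphic function on $\{\|z\|<R\} \times \{0<|w|<r\}$ whose restriction to a single slice $\{z^0\} \times \{0<|w|<r\}$ extends meromorphically across $w=0$ must itself extend meromorphically to a connected neighborhood of the base slice $\{\|z\|<R\} \times \{0\}$, and hence in particular to a full neighborhood of $(0,0)$, since $z^0$ and $0$ are both contained in a connected neighborhood in the base. The resulting meromorphic representative of $f$ is defined in a neighborhood of $(0,0)$, contradicting the hypothesis that $(0,0)$ is an essential singularity of $f$.

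The main obstacle is simply to invoke the correct form of the continuation theorem: what is needed is the propagation-from-a-single-slice statement, rather than a purely local Hartogs-type removal of a codimension-two singular set. Once this classical fact is granted, the argument reduces to a one-line contradiction, and the lemma follows at once.
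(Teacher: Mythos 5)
There is a genuine gap: the ``propagation from a single slice'' statement you invoke is not a theorem, and in fact it is false. Take $f(z,w)=e^{z/w}$ on $\mathbb{C}\times\{0<|w|<r\}$. Its restriction to the slice $z^0=0$ is the constant $1$, which extends across $w=0$ without any singularity, yet $f$ has an essential singularity at $(0,0)$: a meromorphic representative $h_1/h_2$ near $(0,0)$ would have to satisfy $|f|\sim C|w|^{a}$ there (both $h_i$ can vanish only on $\{w=0\}$ since $f$ is holomorphic and zero-free off that set), while $f(cw,w)=e^{c}$ takes every value in $\mathbb{C}^{*}$ as $w\to 0$. So meromorphic extendability of one slice function tells you nothing about meromorphic continuation of $f$ itself; a single point $\{z^0\}$ is pluripolar, and even the Rothstein--Levi results on extension from families of slices require the set of good parameters to be non-pluripolar.

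The repair is to read the conclusion of the lemma as a statement about the two-variable function: ``$f$ has an essential singularity in $\{z^0\}\times\{|w|<r\}$'' means that some point of that slice, necessarily $(z^0,0)$, is an essential singularity of $f$ as a function of $(z,w)$. The negation you should therefore assume is that $f$ extends meromorphically to a full $(N+1)$-dimensional neighborhood of $(z^0,0)$, not merely that $f(z^0,\cdot)$ extends. That neighborhood, united with the domain $\{\|z\|<R\}\times\{0<|w|<r\}$ on which $f$ is already meromorphic, contains a Hartogs--Levi figure centered at $z^0$ (a small polydisc in $z$ over the full disc $\{|w|<\delta\}$ together with the large polydisc over the annulus $\{\delta/2<|w|<r\}$), and the continuation theorem of Levi for meromorphic functions then extends $f$ meromorphically to $\{\|z-z^0\|<R-\|z^0\|\}\times\{|w|<r\}$. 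For $\|z^0\|<R/2$ this set contains $(0,0)$, contradicting the hypothesis. In other words, the ``purely local Hartogs-type'' version you explicitly set aside is exactly the form of Levi's theorem that the paper's citation of Levi, Kneser and Okuda--Sakai is pointing at; the single-slice version does not exist.
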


\begin{proposition}
Let $K$ be a non-degenerate subfield of a {\rm W}-type subfield of 
${\mathfrak M}({\mathbb C}^n)$ with ${\rm Trans}_{{\mathbb C}}K = n$.
If $\widetilde{K}/K$ is an algebraic extension, then
there exists a ${\mathbb C}$-linear isomorphism $\Phi : {\mathbb C}^n
\longrightarrow {\mathbb C}^n$ such that $\Phi^{*}\widetilde{K}$ is
a subfield of a {\rm W}-type subfield of ${\mathfrak M}({\mathbb C}^n)$.
\end{proposition}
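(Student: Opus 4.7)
The plan is to construct, from the W-type subfield $L \supset K$ and the algebraic extension $\widetilde{K}/K$, an auxiliary connected commutative complex Lie group $Y$ isogenous to $X$ such that, after one $\mathbb{C}$-linear change of coordinates $\Phi$ on $\mathbb{C}^n$ putting $Y$ in standard form, every element of $\widetilde{K}$ descends to $Y$ and extends meromorphically to its standard compactification $\overline{Y}$.

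First I would analyze the period groups. Write $L = \sigma_X^{*}(\mathfrak{M}(\overline{X})|_X)$ with $X = \mathbb{C}^n/\Gamma_L = \mathbb{C}^p \times (\mathbb{C}^{*})^q \times \mathcal{Q}$ in standard form. From $K \subset L$ and $K \subset \widetilde{K}$ we have $\Gamma_L, \Gamma_{\widetilde{K}} \subset \Gamma_K$; since $K$ is non-degenerate with ${\rm Trans}_{\mathbb{C}}K = n$ and both $L$ and $\widetilde{K}$ have transcendence degree $n$, Proposition 3 applied to each pair shows both sublattices have finite index in $\Gamma_K$. Hence $\Gamma^{*} := \Gamma_L \cap \Gamma_{\widetilde{K}}$ is commensurable with $\Gamma_L$ and with $\Gamma_{\widetilde{K}}$, and the identity on $\mathbb{C}^n$ descends to an isogeny $Y := \mathbb{C}^n/\Gamma^{*} \to X$. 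By Proposition 2 and Remark 1, $Y$ has the form $\mathbb{C}^p \times (\mathbb{C}^{*})^q \times \mathcal{Q}'$ with $\mathcal{Q}'$ a quasi-abelian variety of kind $0$.

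Next I would choose $\Phi : \mathbb{C}^n \to \mathbb{C}^n$ whose inverse carries the Remmert-Morimoto subspaces of $Y$ to the coordinate subspaces. Because $\Gamma^{*}$ and $\Gamma_L$ are commensurable, they span the same real subspace of $\mathbb{C}^n$ and yield the same Remmert-Morimoto decomposition, so the same $\Phi$ simultaneously places both $X_0 := \Phi^{-1}(X)$ and $Y_0 := \Phi^{-1}(Y)$ in standard form. By Proposition 2 the induced isogeny $Y_0 \to X_0$ decomposes as the identity on $\mathbb{C}^p$, a monomial isogeny on $(\mathbb{C}^{*})^q$ (which extends rationally to $(\mathbb{P}^1)^q$), and an isogeny of quasi-abelian varieties of kind $0$; the last piece, being a bundle morphism over an isogeny of abelian varieties by Proposition 4, extends to a morphism of the projective standard compactifications. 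Pulling back gives $\Phi^{*}L \subset L_0 := \sigma_{Y_0}^{*}(\mathfrak{M}(\overline{Y_0})|_{Y_0})$, which is a W-type subfield by construction.

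Finally, for $f \in \widetilde{K}$ the function $\Phi^{*}f$ has periods containing $\Phi^{-1}(\Gamma^{*})$ and so descends to a meromorphic function on $Y_0$, and it satisfies a monic polynomial $P(T) = T^N + \sum_j a_j T^{N-j}$ with coefficients $a_j \in \Phi^{*}K \subset L_0$ that extend meromorphically to $\overline{Y_0}$. The standard bound $|\Phi^{*}f|^N \le \sum_j |a_j||\Phi^{*}f|^{N-j}$ then shows $\Phi^{*}f$ grows at worst polarly at the boundary divisor $\overline{Y_0}\setminus Y_0$, and hence extends meromorphically to all of $\overline{Y_0}$, giving $\Phi^{*}\widetilde{K} \subset L_0$. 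The main obstacle I anticipate is ruling out essential singularities of $\Phi^{*}f$ along the high-codimension strata of $\overline{Y_0}\setminus Y_0$ where several $\mathbb{P}^1$-coordinates become infinite simultaneously: the polynomial bound is clean on the smooth part of any single boundary divisor, but one must iterate it, or invoke Lemma 7 fiber by fiber along the $(\mathbb{P}^1)$-bundle structure of $\overline{Y_0}$, to propagate the extension across the full boundary.
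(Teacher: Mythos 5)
Your proposal reaches the same destination as the paper but by a genuinely different route in the one step that carries the real content. The bookkeeping at the start (showing that after a single linear change of variables $\widetilde{K}$ descends to a group isogenous to $X$ whose compactification still receives the coefficient field) is organized differently from the paper --- you intersect the period lattices $\Gamma_L\cap\Gamma_{\widetilde K}$ and pass to an auxiliary $Y$, whereas the paper simply applies its Proposition 3 to get $\Phi^{*}\widetilde K\subset\sigma^{*}\mathfrak{M}(X)$ and keeps working on the original $X$ --- but both versions are correct and rest on the same isogeny lemmas (note your citations are shifted: the product decomposition is the paper's Proposition 1, the bundle morphism over the abelian base is its Proposition 2, and the lemma you want at the end is Lemma 8, not Lemma 7, which concerns $D$-closedness and is irrelevant here). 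The real divergence is the extension step. The paper argues by contradiction: it assumes $g$ has an essential singularity $a$ on the boundary, restricts to a line $L_{a'}$ through $a$ in a fiber of $\overline{\rho}$, normalizes the (non-monic) polynomial coefficients to extract a limiting polynomial $P_0$, and invokes Picard's big theorem to produce a sequence on which $g$ takes a value $c$ with $P_0(c)\neq 0$, a contradiction. You instead use the monic minimal polynomial over the field $K$ and the Cauchy-type root bound $|f|\leq 1+\sum_j|a_j|$. This is arguably cleaner, and the obstacle you flag at the end (propagating the extension across the higher-codimension strata of $\overline{Y_0}\setminus Y_0$) dissolves if you finish the estimate in the standard way: near a boundary point write each $a_j=p_j/q_j$ locally, set $Q=\prod_j q_j$, and observe that $Qf$ satisfies a monic polynomial with \emph{holomorphic} coefficients $a_jQ^{j}$, hence is holomorphic and bounded on $U\setminus(\overline{Y_0}\setminus Y_0)$; Riemann's removable singularity theorem for bounded holomorphic functions then extends $Qf$ across the entire analytic boundary set in one stroke, regardless of codimension, so $f=Qf/Q$ is meromorphic on $U$. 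With that single standard paragraph supplied, your argument is complete and avoids both Picard's theorem and the paper's case analysis on the orders of the poles of the leading coefficients.
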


\begin{proof}
Let $X$ and $K(X) = \sigma^{*}({\mathfrak M}(\overline{X})|_X)$
be the same as in Proposition 4.
We write $X = {\mathbb C}^n/\Gamma = {\mathbb C}^p \times
({\mathbb C}^{*})^q \times {\mathcal Q}$, where ${\mathcal Q} =
{\mathbb C}^r/\Gamma_0$.
We note $\Gamma \subset \Gamma_K$ for $K \subset K(X)$. Since
${\rm Trans}_{{\mathbb C}}\widetilde{K} = {\rm Trans}_{{\mathbb C}}K = n$,
there exists a ${\mathbb C}$-linear isomorphism $\Phi : {\mathbb C}^n
\longrightarrow {\mathbb C}^n$ such that
$\Phi^{*}\widetilde{K} \subset \sigma_K^{*}{\mathfrak M}({\mathbb C}^n/
\Gamma_K)$ by Proposition 3, where $\sigma_K : {\mathbb C}^n \longrightarrow
{\mathbb C}^n/\Gamma_K$ is the projection. We have
$\sigma_K^{*}{\mathfrak M}({\mathbb C}^n/\Gamma_K) \subset
\sigma^{*}{\mathfrak M}(X)$ for $\Gamma \subset \Gamma_K$.
Then we obtain $\Phi^{*}\widetilde{K} \subset \sigma^{*}{\mathfrak M}(X)$.\par
Therefore, it is sufficient to show that
$\Phi^{*}\widetilde{K} \subset K(X) = \sigma^{*}({\mathfrak M}(\overline{X})|_X)$.
We have $\Phi^{*}K \subset K(X)$ by $K \subset \widetilde{K}$,
$K \subset K(X)$ and Proposition 2.
Suppose that there exists $g \in \Phi^{*}\widetilde{K}$ such that $g$ is not
meromorphically extendable to $\overline{X}$.
Let $f_1, \dots , f_n \in \Phi^{*}K$ be algebraically independent. 
Since $\Phi^{*}\widetilde{K}/\Phi^{*}K$ is an algebraic extension,
there exists an irreducible polynomial
$$P(S_1, \dots , S_n,T) = \sum _{j=0}^{N}
A_j(S_1, \dots , S_n)T^j$$
with $N \geqq 1$ such that
\begin{equation}
P(f_1, \dots , f_n, g) = 0,
\end{equation}
where $A_j(S_1, \dots , S_n) \in {\mathbb C}[S_1, \dots , S_n]$.
Since $g$ is not meromorphically extendable to $\overline{X}$,
it has an essential singularity $a \in \overline{X} \setminus X$.
\par
Let ${\rm rank}\; \Gamma _0 = r + s$ $(1 \leqq s \leqq r)$. Then
$\overline{X}$ is a fiber bundle $\overline{\rho } : \overline{X}
\longrightarrow {\mathbb A}$ over an $s$-dimensional abelian
variety ${\mathbb A}$ with fibers $({\mathbb P}^1)^{p + q + r -s}$. 
We set $x_0 := \overline{\rho }(a) \in {\mathbb A}$. We take coordinates
$(w_1, \dots ,w_p,w_{p+1}, \dots , w_{p+q}, w_{p+q+1}, \dots ,
w_{n-s})$ on the fiber $\overline{\rho }^{-1}(x_0)$. Then
$a \in \overline{\rho }^{-1}(x_0)$ is represented as
$$a = (a_1, \dots , a_{k-1},\varepsilon , a_{k+1}, \dots , a_{n-s})$$
in these coordinates, where $\varepsilon = 0$ or $\infty $ and
$1 \leqq k \leqq n-s$.
We set
$$a' := (a_1, \dots , a_{k-1},a_{k+1}, \dots , a_{n-s}).$$
We may assume $a' \in {\mathbb C}^{n-s-1}$ by Lemma 8. Put
$$L_{a'} := \{ w_1 = a_1, \dots , w_{k-1} = a_{k-1},
w_{k+1} = a_{k+1}, \dots , w_{n-s} = a_{n-s}\}.$$
Then $L_{a'}$ is a complex line in $\overline{\rho }^{-1}(x_0)$
and $g|_{L_{a'}}(w_k)$ has an essential singularity at
$w_k = \varepsilon $. 
Taking another $a$ if necessary, we may further
assume that $L_{a'}$ is not contained in both the zero set of
$A_j(f_1, \dots ,f_n)$ and the polar set of $A_j(f_1, \dots , f_n)$
for all $j=0, 1, \dots , N$, by Lemma 8. In this case, $w_k = \varepsilon $ is
a holomorphic point or a pole of $A_j(f_1, \dots , f_n)|_{L_{a'}}$.
We have the following possibilities.
\par
\medskip
\noindent
(i) $A_j(f_1, \dots , f_n)|_{L_{a'}}$ is holomorphic at
$w_k = \varepsilon $ for all $j=0, 1, \dots , N$.\\
(ii) There exists $i$ such that $A_i(f_1, \dots , f_n)|_{L_{a'}}$
has a pole at $w_k = \varepsilon $.\par
\medskip
\noindent
We may assume $A_N(f_1, \dots , f_n)|_{L_{a'}}(\varepsilon ) \not= 0$
in the case (i) using Lemma 8 again if necessary. Let $i_1, \dots , i_{\ell }$ with
$0 \leqq i_1 < \cdots < i_{\ell } \leqq N$ be all $i$ possessing the
property in the case (ii).
We denote by $k_{\mu }$ the order of pole of 
$A_{i_{\mu }}(f_1, \dots , f_n)|_{L_{a'}}$ at $w_k = \varepsilon $
for all $\mu = 1, \dots , \ell $. We set
$$
m:=
\begin{cases}
0 & \text{in the case (i)},\\
\max \{ k_1, \dots , k_{\ell }\} &
\text{in the case (ii)}\\
\end{cases}
$$
and
$$
N_0 :=
\begin{cases}
N & \text{in the case (i)},\\
\max \{ i_{\mu } ; m = k_{\mu }\} &
\text{in the case (ii)}.
\end{cases}
$$
By the definition of $m$, $w_k^{\pm m} A_j(f_1, \dots , f_n)|_{
L_{a'}} (w_k)$ is holomorphic at $w_k = \varepsilon $ for all
$j=0,1, \dots ,N$, where $\pm $ is determined according to
$\varepsilon = 0$ or $\infty $. Then, we have
\begin{equation}
\sum _{j=0}^{N} w_k^{\pm m} A_j(f_1, \dots , f_n)|_{L_{a'}}(w_k)
T^j \longrightarrow P_0(T)
\end{equation}
as $w_k \to \varepsilon $, where $P_0(T)$ is a polynomial of 
degree $N_0$. Take $c \in {\mathbb C}$ such that $c$ is neither
a solution of $P_0(T)=0$ nor an exceptional value of $g|_{L_{a'}}$
at $w_k = \varepsilon $. Then there exists a sequence
$\{ w_k^{(\nu )} \} \subset L_{a'}$ with $w_k^{(\nu )} \to a$ such
that $g(w_k^{(\nu )}) = c$ by Picard's big theorem. It follows from
(6.1) that
\begin{equation*}
\begin{split}
0 & = P(f_1(w_k^{(\nu )}), \dots , f_n(w_k^{(\nu )}), g(w_k^{(\nu )}))\\
& = P(f_1(w_k^{(\nu )}), \dots , f_n(w_k^{(\nu )}), c).
\end{split}
\end{equation*}
Letting $\nu \to \infty $, we obtain $P_0(c)=0$ by (6.2). This
contradicts the choice of $c$. Therefore, any $g \in \Phi^{*}\widetilde{K}$
is meromorphically extendable to $\overline{X}$.
\end{proof}

We are in a position to prove Theorem 1.
\par
\medskip
\noindent
{\it Proof of Theorem 1.}
The implications (3) $\Rightarrow $ (2) $\Rightarrow $ (1)
are trivial.\par
Assume that (1) holds. Let $f$ be a non-constant function in $K$
with $f' \in K$. Put $K_0 := {\mathbb C}(f,f')$. Then, by the Briot-Bouquet
theorem $K_0$ is a subfield of ${\mathbb C}(\zeta)$, ${\mathbb C}(e^{\alpha \zeta})$
$(\alpha \in {\mathbb C}^{*})$ or an elliptic function field.
We may assume $\alpha = 1$ by a linear change of variable.
By Lemma 6 $K_0$ is closed by derivative. Since $K/K_0$ is an
algebraic extension, $K$ is also closed by derivative by Lemma 7.
It follows from Proposition 5 that there exists a ${\mathbb C}$-linear isomorphism
$\Phi : {\mathbb C} \longrightarrow {\mathbb C}$ such that
$\Phi^{*}K$ is a subfield of ${\mathbb C}(\zeta)$, ${\mathbb C}(e^{\zeta})$ or
an elliptic function field, which is closed by derivative.
Using Lemmas 4 and 5, we obtain that $\Phi^{*}K$ is ${\mathbb C}(\zeta)$,
${\mathbb C}(e^{\zeta})$ or a subfield of an elliptic function field which is
closed by derivative. Thus the implication (1) $\Rightarrow $ (3)
is proved.
\hfill
$\Box$

\section{Main theorem}
In the previous sections we assumed that a subfield $K$ of
${\mathfrak M}({\mathbb C}^n)$ is non-degenerate. The following
example shows that this condition is necessary in our argument.

\begin{example}
We define $f(z) := z_1$ and $g(z) := e^{z_1^2}$ for
$z = (z_1, z_2) \in {\mathbb C}^2$. Put $K := {\mathbb C}(f,g)$.
Then $K$ satisfies the condition {\rm (T)} as a subfield of
${\mathfrak M}({\mathbb C}^2)$. It is obviously $D$-closed.
However, it does not become a subfield of a {\rm W}-type subfield
by any linear change of variables.
\end{example}

Furthermore, the next example shows that another condition is
needed.

\begin{example}
Let $f(z) := z_1$ and $g(z) := e^{z_1^2}e^{z_2}$ for
$z = (z_1,z_2) \in {\mathbb C}^2$. If we set $K := {\mathbb C}(f,g)$,
then $K$ satisfies the condition {\rm (T)}. It is non-degenerate
and $D$-closed, but does not become a subfield of a {\rm W}-type subfield
by any linear change of variables.
\end{example}

Let $K$ be a non-degenerate subfield of ${\mathfrak M}({\mathbb C}^n)$ with
the period group $\Gamma _K$. Then 
$K \subset \sigma_K^{*}{\mathfrak M}({\mathbb C}^n/\Gamma_K)$, where
$\sigma _{K} : {\mathbb C}^n \longrightarrow
{\mathbb C}^n/\Gamma _{K}$ is the projection.
By the Remmert-Morimoto theorem we have
\begin{equation*}
{\mathbb C}^n/\Gamma _{K} = {\mathbb C}^p \times ({\mathbb C}^{*})^q
\times ({\mathbb C}^r/\Gamma _0),
\end{equation*}
where ${\mathbb C}^r/\Gamma _0$ is a toroidal group with
${\rm rank}\; \Gamma _0 = r+s$ and $n=p+q+r$.
We note that ${\mathbb C}^r/\Gamma_0$ is a quasi-abelian variety,
because there exists a non-degenerate meromorphic function on
${\mathbb C}^r/\Gamma_0$ (for example, see Theorem 5.1.10 in
\cite{abe4}).
From the structure of fiber bundle
$\rho _0 : {\mathbb C}^r/\Gamma _0 \longrightarrow {\mathbb T}$
over an $s$-dimensional complex torus ${\mathbb T}$ with fibers
$({\mathbb C}^{*})^{r-s}$, we obtain a fiber bundle
$\rho _{K} : {\mathbb C}^n/\Gamma _{K} \longrightarrow {\mathbb T}$
with fibers ${\mathbb C}^p \times ({\mathbb C}^{*})^q \times
({\mathbb C}^{*})^{r-s}$.
The next required condition is the following condition {\rm (D)}
concerned with degeneration of the transcendence degree.\par
\medskip
\noindent
{\rm (D)} If $L$ is a complex line in ${\mathbb C}^n$ such that
$\sigma _{K}(L)$ is a factor of the fiber
$\rho _{K}^{-1}(t)$ for some $t \in {\mathbb T}$, then
${\rm Trans}_{{\mathbb C}}K|_L = 1$.
\par

\begin{proposition}
If $K$ is a non-degenerate subfield of a ${\rm W}$-type subfield
$K_W$ with ${\rm Trans}_{{\mathbb C}}K = {\rm Trans}_{{\mathbb C}}K_W$, then it satisfies
the condition ${\rm (D)}$.
\end{proposition}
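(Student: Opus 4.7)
The plan is to establish the bounds ${\rm Trans}_{{\mathbb C}} K|_L \leq 1$ and ${\rm Trans}_{{\mathbb C}} K|_L \geq 1$ separately.

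The upper bound is immediate: every $f \in K$ is $\Gamma_K$-periodic by definition of $\Gamma_K$, hence descends to a meromorphic function on $Y := {\mathbb C}^n/\Gamma_K$; restriction to $L$ factors through the $1$-dimensional analytic set $\sigma_K(L) \subset Y$, which forces ${\rm Trans}_{{\mathbb C}} K|_L \leq 1$.

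For the lower bound, I would first apply Proposition 3 with $K_1 = K$ and $K_2 = K_W$: non-degeneracy of $K$ together with the equality of transcendence degrees yields an isogeny $\pi : X := {\mathbb C}^n/\Gamma_{K_W} \longrightarrow Y$ induced by the identity on ${\mathbb C}^n$, so $K_W$ (finitely generated over ${\mathbb C}$) is a finite algebraic extension of $K$. It then suffices to produce a single $h \in K_W$ non-constant on $L$: writing its minimal polynomial over $K$ as
\[
h^N + A_{N-1} h^{N-1} + \cdots + A_0 = 0, \qquad A_i \in K,
\]
some $A_i$ must be non-constant on $L$, else $h|_L$ would satisfy a polynomial equation with constant coefficients and assume only finitely many values on the connected set $L$, contradicting the choice of $h$.

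To produce $h$, invoke Proposition 2 to align the Remmert--Morimoto decompositions of $X = {\mathbb C}^p \times ({\mathbb C}^*)^q \times {\mathcal Q}$ and $Y = {\mathbb C}^p \times ({\mathbb C}^*)^q \times {\mathcal T}$: since the linear extension of $\pi$ is the identity on ${\mathbb C}^n$, the subspace splittings ${\mathbb C}^n = V_1 \oplus V_2 \oplus V_3$ for the two lattices coincide, and the two toroidal lattices inside $V_3$ (being commensurable via the finite kernel of $\pi$) share the same ${\mathbb R}$-span and hence the same maximal complex subspace $U$; fix a complex complement $W$ to $U$ in $V_3$. Writing $L = c + V$ with $V$ a $1$-dimensional subspace, the hypothesis that $\sigma_K(V)$ is a factor of $\rho_K^{-1}(t)$ forces $V$ to lie in exactly one of $V_1$, $V_2$, or $W$. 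If $V \subset V_1$, some linear coordinate $z_j$ with $1 \leq j \leq p$ (which belongs to $K_W$) satisfies $z_j(v) \neq 0$. If $V \subset V_2$, then for some $p < j \leq p+q$ the exponential $e^{z_j}|_L$ is non-constant. If $V \subset W$, the closure of $\sigma_X(V)$ in the projective variety $\overline{X} = ({\mathbb P}^1)^p \times ({\mathbb P}^1)^q \times \overline{{\mathcal Q}}$ is a $1$-dimensional subvariety on which some element of ${\mathfrak M}(\overline{X})$ restricts non-trivially (e.g., via a coordinate projection after embedding $\overline{X}$ in projective space); its pullback to ${\mathbb C}^n$ supplies the desired $h \in K_W$.

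The main technical obstacle is the coordinate-alignment step: verifying, via Proposition 2 and the commensurability of the toroidal lattices, that the three subspaces $V_1, V_2, U$ may be chosen identical for $\Gamma_K$ and $\Gamma_{K_W}$. Once this compatibility is established, each of the three cases reduces to a direct check against the explicit generators of $K_W$ listed in Proposition 4.
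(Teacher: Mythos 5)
There is a genuine gap, and it sits in the step you dismiss as immediate. The claim that restriction to $L$ ``factors through the $1$-dimensional analytic set $\sigma_K(L)\subset Y$, which forces ${\rm Trans}_{{\mathbb C}}K|_L\leqq 1$'' is false as stated: under condition {\rm (D)} the set $\sigma_K(L)$ is a factor of a fiber ${\mathbb C}^p\times ({\mathbb C}^{*})^q\times({\mathbb C}^{*})^{r-s}$, hence a copy of ${\mathbb C}$ or ${\mathbb C}^{*}$, which is \emph{not} compact, and the field of meromorphic functions on a non-compact one-dimensional complex manifold has infinite transcendence degree (already $\zeta$ and $e^{\zeta}$ on ${\mathbb C}$ are algebraically independent). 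One-dimensionality of the image alone bounds nothing. The paper's Example 2 defeats your reasoning verbatim: $K={\mathbb C}(z_1,e^{z_1^2}e^{z_2})$ is non-degenerate, ${\mathbb C}^2/\Gamma_K={\mathbb C}\times{\mathbb C}^{*}$, the line $L=\{z_2=0\}$ maps onto the one-dimensional factor ${\mathbb C}\times\{1\}$, and yet $K|_L={\mathbb C}(z_1,e^{z_1^2})$ has transcendence degree $2$. The upper bound is exactly where the hypothesis $K\subset K_W$ must enter: every element of $K_W=\sigma^{*}({\mathfrak M}(\overline{X})|_X)$ extends meromorphically to the standard compactification $\overline{X}$, the closure of $\sigma(L)$ in the compact fiber $({\mathbb P}^1)^{p+q+r-s}$ is a ${\mathbb P}^1$, and therefore $K_W|_L$ (hence $K|_L$) embeds in the rational function field of that ${\mathbb P}^1$. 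This is what the paper's phrase ``by the definition of ${\mathfrak M}(\overline{X})|_X$'' is carrying; your proof never uses the W-type hypothesis for the upper bound, and without it the inequality simply fails.

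Your lower bound, by contrast, is essentially the paper's route and is sound in outline: Proposition 3 makes $K_W$ algebraic over $K$, Proposition 2 aligns the Remmert--Morimoto factors and the bundle projections attached to $\Gamma_{K_W}$ and $\Gamma_K$ so that $L$ can be read as a factor of a fiber of $\rho:X\longrightarrow{\mathbb A}$, an explicit generator of $K_W$ is non-constant on $L$, and the monic minimal polynomial of that generator over $K$ then forces some coefficient in $K$ to be non-constant on $L$. (The same coefficient trick, run in the other direction, is also how one passes the \emph{upper} bound from $K_W|_L$ down to $K|_L$, so once you repair the first paragraph the two halves fit together as in the paper.)
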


\begin{proof}
We first show that $K_W$ satisfies the condition ${\rm (D)}$.
Let ${\rm Trans}_{{\mathbb C}}K_W = n$.
By Definition 5 we may write
$K_W = \sigma ^{*}({\mathfrak M}(\overline{X})|_X)$, where
$X = {\mathbb C}^n/\Gamma_{K_W}  = {\mathbb C}^p \times ({\mathbb C}^{*})^q \times {\mathcal Q}$
and $\sigma : {\mathbb C}^n \longrightarrow X$ is the projection.
An $r$-dimensional quasi-abelian variety
${\mathcal Q} = {\mathbb C}^r/\Gamma _0$ of kind 0 is a fiber
bundle $\rho _0 : {\mathcal Q} \longrightarrow {\mathbb A}$ over
an $s$-dimensional abelian variety ${\mathbb A}$, where
${\rm rank}\; \Gamma _0 = r+s$. Then we have a fiber bundle
$\rho : X \longrightarrow {\mathbb A}$ with fibers
${\mathbb C}^p \times ({\mathbb C}^{*})^q \times 
({\mathbb C}^{*})^{r-s}$. Let $L$ be a complex line in
${\mathbb C}^n$ such that $\sigma (L)$ is a factor of the fiber
$\rho ^{-1}(a)$ for some $a \in {\mathbb A}$. Then, it is obvious
that ${\rm Trans}_{{\mathbb C}}K_W|_L = 1$ by the definition of
${\mathfrak M}(\overline{X})|_X$.\par
Next we consider the general case. Let $K$ be a subfield of $K_W$ with
${\rm Trans}_{{\mathbb C}}K = {\rm Trans}_{{\mathbb C}}K_W = n$.
By Proposition 3 the identity mapping $id_{{\mathbb C}^n}$ gives
an isogeny $\varphi : X \longrightarrow
X_K := {\mathbb C}^n/\Gamma_K$.
Since $X$ and $X_K$ are isogenous, we can write 
$X_K = {\mathbb C}^p \times ({\mathbb C}^{*})^q \times {\mathcal Q}_K$,
where ${\mathcal Q}_K = {\mathbb C}^r/(\Gamma_K)_0$ is a quasi-abelian
variety of kind 0. Then $X_K$ is a fiber bundle 
$\rho_K : X_K \longrightarrow {\mathbb A}_K$ over an $s$-dimensional
abelian variety ${\mathbb A}_K$ with fibers
${\mathbb C}^p \times ({\mathbb C}^{*})^q \times ({\mathbb C}^{*})^{r-s}$.
Let $L$ be a complex line in ${\mathbb C}^n$ such that $\sigma_K(L)$
is a factor of the fiber $\rho_K^{-1}(a)$ for some $a \in {\mathbb A}_K$.
By Proposition 2, $id_{{\mathbb C}^n}$ also gives an isogeny
$\varphi_{{\mathbb A}} : {\mathbb A} \longrightarrow {\mathbb A}_K$.
Then $L$ is considered as a factor of $\rho^{-1}(\widetilde{a})$ 
for some $\widetilde{a}$  with $\varphi_{{\mathbb A}}(\widetilde{a}) = a$.
Since $K_W$ satisfies the
condition {\rm (D)}, we have
${\rm Trans}_{{\mathbb C}}K_W|_{L} = 1$.
Then we obtain ${\rm Trans}_{{\mathbb C}}K|_{L} = 1$.
This finishes the proof.
\end{proof}

The following theorem is a generalization of Theorem 1. When
$n=1$, we cannot recognize the condition {\rm (D)},
because it is meaningless in this case.

\begin{theorem}
Let $K$ be a non-degenerate subfield of ${\mathfrak M}({\mathbb C}^n)$
satisfying the condition {\rm (T)}.
Then the following statements are equivalent.\\
(1) $K$ satisfies the condition {\rm (D)} and is an algebraic extension
of a field determined by solutions $f_1, \dots , f_n$
of a system of Briot-Bouquet type partial
differential equations.\\
(2) $K$ satisfies the condition {\rm (D)} and is $D$-closed.\\
(3) There exist a {\rm W}-type subfield
$\sigma^{*}({\mathfrak M}(\overline{X})|_X)$ with
$X = {\mathbb C}^p \times ({\mathbb C}^{*})^q \times {\mathcal Q}$ and
an isogeny $\varphi : X \longrightarrow {\mathbb C}^n/\Gamma_K$ such that
$$\Phi^{*}K = {\mathbb C}(z_1, \dots , z_p, e^{z_{p+1}}, \dots , e^{z_{p+q}})
\cdot K_0,$$
where $\Phi$ is the linear extension of $\varphi$ and $K_0$ is a $D$-closed
subfield of $\sigma_{{\mathcal Q}}^{*}({\mathfrak M}(\overline{{\mathcal Q}})|_{{\mathcal Q}})$
with the projection $\sigma_{{\mathcal Q}} : {\mathbb C}^r \longrightarrow {\mathcal Q}$.
\end{theorem}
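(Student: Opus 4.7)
The plan is to establish the cycle $(1) \Leftrightarrow (2) \Leftrightarrow (3)$, with the implication $(2) \Rightarrow (3)$ carrying the main content. The equivalence $(1) \Leftrightarrow (2)$ is essentially formal: for $(1) \Rightarrow (2)$, the field $K'$ generated by the $f_i$ and their partials is $D$-closed by Lemma 6, and since $K/K'$ is algebraic, Lemma 7 transfers $D$-closedness to $K$ (condition ${\rm (D)}$ is carried along). For $(2) \Rightarrow (1)$, condition ${\rm (T)}$ yields algebraically independent $f_1, \dots, f_n \in K$, and $D$-closedness forces each $\partial f_i/\partial z_j \in K$, hence algebraic over ${\mathbb C}(f_1, \dots, f_n)$; the minimal polynomials supply the irreducible relations $P_{ij}$ of (5.1). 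The intermediate field $K' := {\mathbb C}(f_1, \dots, f_n, \{\partial f_i/\partial z_j\})$ is determined by this BB-type system, and $K/K'$ is algebraic since both fields have transcendence degree $n$ and $K$ is finitely generated over ${\mathbb C}$.

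For $(3) \Rightarrow (2)$: $D$-closedness is immediate from the product structure, since partial derivatives of rational functions in $z_1, \dots, z_p$ stay rational, those of $e^{z_{p+j}}$ reproduce the same exponentials, and $K_0$ is $D$-closed in the remaining variables; the ${\mathbb C}$-linear change $\Phi$ commutes with $D$-closedness. Condition ${\rm (D)}$ follows from Proposition 6, since $\Phi^*K$ is a non-degenerate subfield of the {\rm W}-type $\sigma^*({\mathfrak M}(\overline X)|_X)$ with matching transcendence degree $n$; the condition transfers back to $K$ via $\Phi^{-1}$.

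The main content is $(2) \Rightarrow (3)$. The plan is to show that some algebraic extension $\widetilde{K}$ of $K$ admits ${\rm (AAT)}$, and then to chain Theorem 2, Proposition 5, and Proposition 4. Using non-degeneracy and Remmert--Morimoto, write ${\mathbb C}^n/\Gamma_K = {\mathbb C}^p \times ({\mathbb C}^{*})^q \times ({\mathbb C}^r/\Gamma_0)$ with ${\mathbb C}^r/\Gamma_0$ quasi-abelian. Applying condition ${\rm (D)}$ to complex lines along each factor direction of the bundle $\rho_K$, the restrictions $K|_L$ are $D$-closed of transcendence degree $1$, so Theorem 1 identifies each as ${\mathbb C}(\zeta)$, ${\mathbb C}(e^{\zeta})$, or a subfield of an elliptic function field. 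Assembling these 1-dimensional structures across directions, together with the fact that any quasi-abelian variety is isogenous to one of kind $0$ (Section 4), produces ${\rm (AAT)}$ for an algebraic extension $\widetilde{K}$. Theorem 2 then places $\Phi^*\widetilde{K}$ in a {\rm W}-type subfield $\sigma^*({\mathfrak M}(\overline X)|_X)$ for some ${\mathbb C}$-linear $\Phi$ and $X = {\mathbb C}^p \times ({\mathbb C}^{*})^q \times {\mathcal Q}$ of kind $0$; Proposition 5 moves the containment to $\Phi^*K$; Proposition 4 gives the decomposition ${\mathbb C}(z_1, \dots, z_p, e^{z_{p+1}}, \dots, e^{z_{p+q}}) \cdot K_0$; and the required isogeny $\varphi : X \to {\mathbb C}^n/\Gamma_K$ is the composition of the Proposition 3 isogeny ${\mathbb C}^n/\Gamma \to {\mathbb C}^n/\Gamma_{\Phi^*K}$ with the $\Phi$-induced isomorphism ${\mathbb C}^n/\Gamma_{\Phi^*K} \to {\mathbb C}^n/\Gamma_K$, so that $\Phi$ is the linear extension of $\varphi$.

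The principal obstacle is the assembly step producing ${\rm (AAT)}$. Individual line restrictions only witness periods along a single direction, whereas ${\rm (AAT)}$ demands a globally coherent period lattice; Examples 1 and 2 illustrate how local 1-dimensional good behavior can coexist with global pathology when either non-degeneracy or condition ${\rm (D)}$ fails. The proof must use both hypotheses together to align the 1-dimensional periods extracted from Theorem 1 into the lattice of a quasi-abelian variety, and from there extract a genuine ${\rm (AAT)}$ for an algebraic extension of $K$.
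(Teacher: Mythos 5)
Your reduction of $(1)\Leftrightarrow(2)$ and your $(3)\Rightarrow(2)$ are fine and consistent with the paper (which runs the cycle as $(1)\Rightarrow(2)\Rightarrow(3)\Rightarrow(1)$, using the same Lemmas 6 and 7 and Proposition 6). The problem is the core implication $(2)\Rightarrow(3)$. Your plan is to manufacture an algebraic extension $\widetilde{K}$ admitting {\rm (AAT)} and then invoke Theorem 2, but you never supply the argument for that step --- you explicitly flag the ``assembly'' of the one-dimensional data into {\rm (AAT)} as the principal obstacle and leave it unresolved. This is not a technical detail: deriving {\rm (AAT)} from $D$-closedness is essentially the converse of Proposition 7 and is nowhere established in the paper; nothing in Sections 1--6 gives you a mechanism to turn periods and line restrictions into the rational addition formulas (3.1). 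So as written the proposal proves the easy implications and leaves the hard one as a declared gap.

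The paper takes a different and workable route that avoids {\rm (AAT)} entirely. Starting from $X={\mathbb C}^n/\Gamma_K={\mathbb C}^p\times({\mathbb C}^{*})^q\times({\mathbb C}^r/\Gamma_0)$ (non-degeneracy plus Remmert--Morimoto, as in your setup), it shows directly that $\kappa\subset{\mathfrak M}(\overline{X})|_X$: condition {\rm (D)} makes each restriction $K|_L$ to a fiber factor a derivative-closed field of transcendence degree $1$, Theorem 1 then gives meromorphic extendability of $f|_C$ to ${\mathbb P}^1$ in each factor separately, and a Levi/Hartogs-type result (Proposition 6.4 in \cite{abe1}) assembles these separate one-variable extensions into extendability over the whole fiber $({\mathbb P}^1)^{p+q+r-s}$. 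The ``global coherence'' you worry about never arises, because the target is membership in ${\mathfrak M}(\overline{X})|_X$ rather than an addition theorem; the lattice is already fixed as $\Gamma_K$. Finally, ${\rm Trans}_{{\mathbb C}}{\mathfrak M}(\overline{X})=n$ forces $\overline{X}$ to be projective algebraic, hence ${\mathbb C}^r/\Gamma_0$ is of kind $0$, so $K$ is a $D$-closed subfield of a {\rm W}-type subfield and Proposition 4 yields the decomposition in (3). If you want to salvage your outline, replace the {\rm (AAT)}/Theorem 2 detour by this extendability argument; otherwise you must actually prove that a $D$-closed field satisfying {\rm (D)} admits {\rm (AAT)} after an algebraic extension, which is a substantial unproved claim.
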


\begin{proof}
The implication (1) $\Rightarrow$ (2) is an immediate consequence from
Lemmas 6 and 7.\par
We assume (2). We set
$$X : = {\mathbb C}^n/\Gamma_K = {\mathbb C}^p \times ({\mathbb C}^{*})^q
\times ({\mathbb C}^r/\Gamma_0).$$
Since there exists a non-degenerate meromorphic function on $X$,
${\mathbb C}^r/\Gamma_0$ is a quasi-abelian variety.
Let ${\rm rank}\; \Gamma_0 = r + s$. The quasi-abelian variety
${\mathbb C}^r/\Gamma_0$ has the structure of the standard principal
$({\mathbb C}^{*})^{r-s}$-bundle
$\rho_0 : {\mathbb C}^r/\Gamma_0 \longrightarrow {\mathbb T}$ over
an $s$-dimensional complex torus ${\mathbb T}$ as a toroidal group. Therefore, $X$
is a fiber bundle $\rho : X \longrightarrow {\mathbb T}$ with fibers
${\mathbb C}^p \times ({\mathbb C}^{*})^q \times ({\mathbb C}^{*})^{r-s}$.
The standard compactification $\overline{X}$ of $X$ is the associated
$({\mathbb P}^{1})^{p+q+r-s}$-bundle $\overline{\rho} : \overline{X} \longrightarrow
{\mathbb T}$. Let $\sigma : {\mathbb C}^n \longrightarrow X$ be
the projection. Then there exists a subfield $\kappa$ of ${\mathfrak M}(X)$
such that $K = \sigma^{*}\kappa$.\par
We show $\kappa \subset {\mathfrak M}(\overline{X})|_X$. It suffices
to show that every $f \in \kappa $ is meromorphically
extended to a compactification $({\mathbb P}^1)^{p+q+r-s}$ of
$\rho ^{-1}(t) = {\mathbb C}^p \times ({\mathbb C}^{*})^{q+r-s}$
for any $t \in {\mathbb T}$. Let $C$ be a factor of $\rho ^{-1}(t)$.
We set $L := \sigma ^{-1}(C)$. Then
${\rm Trans}_{{\mathbb C}}K|_L = 1$ by the condition {\rm (D)}.
Furthermore, $K|_L$ is closed by derivative.
Then $f |_{C}$ is meromorphically extended to ${\mathbb P}^1$ by Theorem 1.
Therefore, $f|_{\rho ^{-1}(t)}$ is meromorphically extended to
$({\mathbb P}^1)^{p+q+r-s}$ (Proposition 6.4 in \cite{abe1}, see also
Proposition 6.6.4 in \cite{abe4}).
Hence we have
$\kappa \subset {\mathfrak M}(\overline{X})|_X$.\par
We generally have ${\rm Trans}_{{\mathbb C}}{\mathfrak M}(\overline{X})
\leqq \dim \overline{X} = n$.
On the other hand, we have ${\rm Trans}_{{\mathbb C}}K = n$.
Then we obtain ${\rm Trans}_{{\mathbb C}}K = {\rm Trans}_{{\mathbb C}}
{\mathfrak M}(\overline{X}) = n$. Therefore, $\overline{X}$ is
projective algebraic. Hence ${\mathbb C}^r/\Gamma _0$ is a
quasi-abelian variety of kind 0 (\cite{abe2}). 
Then $K$ is a $D$-closed subfield of a {\rm W}-type subfield.
By Proposition 4 the statement (3) holds.\par
Lastly we show the implication (3) $\Rightarrow$ (1). We assume the
statement (3). By Proposition 6 $\Phi^{*}K$ satisfies the condition {\rm (D)}.
Since $\Phi$ is the linear extension of an isogeny $\varphi$ and
isogenies are fiber preserving (Proposition 2), $K$ also satisfies the
condition {\rm (D)}. We have
algebraically independent functions
$f_1, \dots , f_n \in K$. We set
\begin{equation*}
K_0 = {\mathbb C} \left( f_1, \dots , f_n, \left\{
\frac{\partial f_i}{\partial z_j } ; i, j = 1, \dots , n\right\} \right).
\end{equation*}
We note that $f_1, \dots , f_n$ satisfy a system of Briot-Bouquet type
partial differential equations.
Then $K_0$ is a $D$-closed subfield of $K$ by Lemma 6. 
It is obvious that $K/K_0$ is an algebraic extension.
\end{proof}

As Theorem 2, the above theorem shows that a {\rm W}-type subfield
is essentially the maximal subfield which satisfies the conditions {\rm (T)}
and {\rm (D)} and is $D$-closed.\par
The relation between $D$-closed subfields and subfields admitting {\rm (AAT)}
is the following.

\begin{proposition}
Let $K$ be a non-degenerate subfield of ${\mathfrak M}({\mathbb C}^n)$
satisfying the condition {\rm (T)}. If $K$ admits {\rm (AAT)}, then
the statement (2) in Theorem 4 holds for
$K$.
\end{proposition}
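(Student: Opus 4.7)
The plan is to split the conclusion of the proposition into its two ingredients and handle each separately. I will obtain $D$-closedness by differentiating the (AAT) identities in the second argument and exploiting the translation invariance that (AAT) itself provides, and I will deduce condition (D) by invoking the embedding theorem (Theorem 2) together with Proposition 6.

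For $D$-closedness, I would write $K = \mathbb{C}(f_0,\dots,f_n)$ and abbreviate $\mathbf{f}(z) := (f_0(z),\dots,f_n(z))$, so that the (AAT) identities read
\[
f_j(z+w) = R_j\bigl(\mathbf{f}(z),\mathbf{f}(w)\bigr), \qquad j=0,\dots,n.
\]
Fix a point $w_0 \in \mathbb{C}^n$ at which every $f_i$ and every $\partial f_i/\partial z_k$ is finite and which avoids the poles of all the rational expressions below; such a point exists since the forbidden locus is a proper analytic subset, and the same can be arranged for $-w_0$. Setting $w = \pm w_0$ in the (AAT) identity gives $f_i(z\pm w_0) \in K$ for every $i$, so $K$ is invariant under the translations $T_{\pm w_0}$. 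Next, differentiating the (AAT) identity with respect to $w_k$ and evaluating at $w = w_0$ produces
\[
\frac{\partial f_j}{\partial z_k}(z+w_0) = \sum_{i=0}^{n}\frac{\partial R_j}{\partial Y_i}\bigl(\mathbf{f}(z),\mathbf{f}(w_0)\bigr)\cdot\frac{\partial f_i}{\partial z_k}(w_0).
\]
The right-hand side is a rational function of $f_0(z),\dots,f_n(z)$ with constant coefficients, hence lies in $K$. Applying $T_{-w_0}$ and using the translation invariance already established yields $\partial f_j/\partial z_k \in K$ for all $j,k$. For a general $f \in K$, writing $f$ as a rational function of $f_0,\dots,f_n$ and invoking the chain rule extends this to $\partial f/\partial z_k \in K$.

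For condition (D), Theorem 2 supplies a $\mathbb{C}$-linear isomorphism $\Phi : \mathbb{C}^n \to \mathbb{C}^n$ such that $\Phi^{*}K$ is a subfield of a W-type subfield $K_W = \sigma^{*}({\mathfrak M}(\overline{X})|_X)$ with $X = \mathbb{C}^p \times (\mathbb{C}^{*})^q \times \mathcal{Q}$ and $\mathcal{Q}$ of kind $0$. Since $\mathcal{Q}$ is of kind $0$, $\overline{X}$ is projective algebraic, so $\mathrm{Trans}_{\mathbb{C}}K_W = \dim X = n = \mathrm{Trans}_{\mathbb{C}}\Phi^{*}K$ by condition (T). Proposition 6 then applies and gives that $\Phi^{*}K$ satisfies (D). The isomorphism $\Phi$ descends to an isomorphism $\mathbb{C}^n/\Gamma_{\Phi^{*}K} \to \mathbb{C}^n/\Gamma_K$ compatible with the Remmert--Morimoto decompositions and the bundle projections $\rho_{\Phi^{*}K}$ and $\rho_K$; it carries factors of fibers to factors of fibers and preserves the transcendence degree of restrictions to complex lines. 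Hence (D) transfers from $\Phi^{*}K$ back to $K$.

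The main obstacle, such as it is, lies in the $D$-closed step: I must pick $w_0$ generic enough that every rational expression obtained by differentiating $R_j$ in the $Y$-block evaluates finitely at $\bigl(\mathbf{f}(z),\mathbf{f}(w_0)\bigr)$, and I must verify \emph{two-sided} translation invariance of $K$ rather than a one-sided inclusion. Once those points are in hand the calculation is formal, and the (D) step is essentially a bookkeeping application of Theorem 2 and Proposition 6.
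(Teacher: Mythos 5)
Your proof is correct. The $D$-closedness half is essentially the paper's own argument: the paper also differentiates the addition formula $f_i(z+w)=R(\mathbf f(z),\mathbf f(w))$ in $w_k$ and evaluates at a fixed point, using the translation invariance that (AAT) supplies; the only cosmetic difference is that the paper first translates so that all $f_i$ are holomorphic at $0$ and then sets $w=0$, whereas you evaluate at a generic $w_0$ and translate back by $-w_0$ afterwards. Where you genuinely diverge is in establishing condition (D): the paper disposes of it in one line by citing Proposition 5.1 of \cite{abe1}, an external result, while you derive it from material already quoted in this paper, namely Theorem 2 (embedding $\Phi^{*}K$ into a W-type subfield), the computation ${\rm Trans}_{{\mathbb C}}K_W=n$ from projectivity of $\overline{X}$, Proposition 6, and a transfer of (D) along the linear isomorphism $\Phi$. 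That transfer step is legitimate and in fact mirrors the argument the paper itself uses later in the proof of Theorem 4, implication (3) $\Rightarrow$ (1), where (D) is pulled back along the linear extension of an isogeny using the fiber-preserving property from Proposition 2. Your route is longer but more self-contained relative to the text; the paper's is shorter but leans on an additional external reference. Both are valid.
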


\begin{proof}
The condition {\rm (D)} is nothing but Proposition 5.1 in \cite{abe1}.\par
We next show that $K$ is $D$-closed.
Let $K = {\mathbb C}(f_0, f_1, \dots ,f_n)$. For any $g \in K$ and any fixed
$a \in {\mathbb C}^n$, we have $g(z+a) \in K$ by {\rm (AAT)}. Then
$$K = {\mathbb C}(f_0(z+a), f_1(z+a), \dots ,f_n(z+a))$$
for any fixed $a \in {\mathbb C}^n$. Therefore, we may assume that
$f_0(z), f_1(z), \dots , f_n(z)$ are holomorphic at $z = 0$. For any
$i = 0, 1, \dots ,n$ there exists a rational function $R \in
{\mathbb C}(S_0, S_1, \dots , S_n, T_0, T_1, \dots , T_n)$ such that
\begin{equation}
f_i(z + w) = R(f_0(z), f_1(z), \dots , f_n(z), f_0(w), f_1(w), \dots , f_n(w))
\end{equation}
for all $z,w \in {\mathbb C}^n$. Differentiating both sides of (7.1) by
$w_k$, we obtain
$$\frac{\partial f_i}{\partial z_k}(z + w) = \sum _{j=0}^{n}
\frac{\partial R}{\partial T_j}(f_0(z), f_1(z), \dots , f_n(z), f_0(w), f_1(w),
\dots ,f_n(w)) \frac{\partial f_j}{\partial w_k}(w).$$
If we set $w = 0$, then we have $\partial f_i/\partial z_k \in K$.
\end{proof}





\flushleft{
Graduate School of Science and Engineering for Research\\
University of Toyama\\
Toyama 930-8555, Japan\\}

\noindent
e-mail: abe@sci.u-toyama.ac.jp\\

\end{document}